\theoremstyle{plain}
\newtheorem{theorem}{Theorem}[section]
\newtheorem{lemma}[theorem]{Lemma}
\newtheorem{corollary}[theorem]{Corollary}
\newtheorem{proposition}[theorem]{Proposition}
\theoremstyle{definition}
\newtheorem{definition}[theorem]{Definition}
\newtheorem{example}[theorem]{Example}
\newtheorem{question}{Question}
\theoremstyle{remark}
\newtheorem*{claim}{Claim}
\newtheorem*{acknowledgment}{Acknowledgments}
\newcommand{\bR}{\mathbb{R}}
\newcommand{\R}{\bR}
\newcommand{\bSigma}{\mathbf{\Sigma}}
\newcommand{\bPi}{\mathbf{\Pi}}
\newcommand{\cA}{\mathcal{A}}
\newcommand{\cB}{\mathcal{B}}
\newcommand{\cC}{\mathcal{C}}
\newcommand{\cD}{\mathcal{D}}
\newcommand{\cF}{\mathcal{F}}
\newcommand{\cG}{\mathcal{G}}
\newcommand{\cH}{\mathcal{H}}
\newcommand{\cI}{\mathcal{I}}
\newcommand{\I}{\cI}
\newcommand{\cJ}{\mathcal{J}}
\newcommand{\J}{\cJ}
\newcommand{\cM}{\mathcal{M}}
\newcommand{\cP}{\mathcal{P}}
\newcommand{\cS}{\mathcal{S}}
\newcommand{\cU}{\mathcal{U}}
\newcommand{\continuum}{\mathfrak{c}}
\newcommand{\bnumber}{\mathfrak{b}}
\newcommand{\dnumber}{\mathfrak{d}}
\newcommand{\pnumber}{\mathfrak{p}}
\newcommand{\fin}{\mathrm{Fin}}
\renewcommand{\subset}{\subseteq}
\DeclareMathOperator{\ED}{\mathcal{ED}}
\DeclareMathOperator{\add}{add}
\DeclareMathOperator{\non}{non}
\DeclareMathOperator{\cov}{cov}
\DeclareMathOperator{\cof}{cof}
\DeclareMathOperator{\adds}{\add^*}
\DeclareMathOperator{\nons}{\non^*}
\DeclareMathOperator{\covs}{\cov^*}
\DeclareMathOperator{\cofs}{\cof^*}
\DeclareMathOperator{\chf}{\mathbf{1}}
\DeclareMathOperator{\ran}{ran}
\DeclareMathOperator{\FS}{FS}
\begin{document}

\title[The Kat\v{e}tov order in-between $\ED$ and $\fin\otimes\fin$ ]{On the structure of Borel ideals in-between the ideals $\ED$ and $\fin\otimes\fin$ in the Kat\v{e}tov order}

\author{Pratulananda Das}
\address[P.~Das]{Department of Mathematics\\ Jadavpur University\\ Kolkata - 700032\\ West Bengal\\ India}
\email{pratulananda@yahoo.co.in}

\author{Rafa\l{} Filip\'{o}w}
\address[R.~Filip\'{o}w]{Institute of Mathematics\\ Faculty of Mathematics, Physics and Informatics\\ University of Gda\'{n}sk\\ ul. Wita Stwosza 57\\ 80-308 Gda\'{n}sk\\ Poland}
\email{Rafal.Filipow@ug.edu.pl}
\urladdr{http://mat.ug.edu.pl/~rfilipow}

\author{Szymon G\l{}\c{a}b}
\address[Sz.~G\l{}\c{a}b]{Institute of Mathematics\\  \L{}\'{o}d\'{z} University of Technology, W\'{o}lcza\'{n}ska 215, 93-005  \L{}\'{o}d\'{z}, Poland}
\email{szymon.glab@p.lodz.pl}
\urladdr{http://im0.p.lodz.pl/~sglab}

\author{Jacek Tryba}
\address[J.~Tryba]{Institute of Mathematics\\ Faculty of Mathematics, Physics and Informatics\\ University of Gda\'{n}sk\\ ul. Wita Stwosza 57\\ 80-308 Gda\'{n}sk\\ Poland}
\email{Jacek.Tryba@ug.edu.pl}

\date{\today}

\subjclass[2010]{Primary: 03E05, Secondary: 03E17, 03E15, 28A05, 40A35.}

\keywords{ideal, Borel ideal, $F_\sigma$ ideal, Kat\v{e}tov order, Kat\v{e}tov-Blass order, cardinal characteristics of an ideal, cardinal characteristics of the continuum.}


\begin{abstract}
For a family $\cF\subseteq \omega^\omega$ we 
define the ideal $\I(\cF)$ on $\omega\times\omega$ 
to be the ideal generated by the family 
$$\{A\subseteq \omega\times\omega:\exists f\in \cF\,\forall^\infty n\, (|\{k:(n,k)\in A\}|\leq f(n))\}.$$
Using ideals of the form $\I(\cF)$, we show that the structure of Borel ideals in-between two well known Borel ideals 
$$\ED = \{A\subseteq\omega\times\omega:\exists m  \, \forall^\infty n\, (|\{k:(n,k)\in A\}|<m))\}$$ 
and 
$$\fin\otimes\fin = \{A\subseteq\omega\times\omega:\forall^\infty n \, (|\{k:(n,k)\in A\}|<\aleph_0))\}$$
 in the Kat\v{e}tov order is fairly complicated. Namely,
there is a copy of $\cP(\omega)/\fin$ 
in-between $\ED$ and $\fin\otimes\fin$, and consequently there are increasing and decreasing chains of length $\bnumber$ and antichains  of size $\continuum$.
\end{abstract}


\maketitle


\setcounter{tocdepth}{1}
\tableofcontents


\section{Introduction}

All the notions and notations used in the introduction are defined in Section~\ref{sec:preliminaries}.

Ideals (and the dual notion -- filters) have traditionally been of much interest in set theory and on the other hand had interesting applications and usages in various
parts of set theory, topology, real analysis and so on (see
e.g.~\cite{MR3545234,MR3712964,MR3436368,MR3247032,MR3490916,MR3436375,MR2520152,MR1711328,MR2777744,MR3692233,MR3351990,MR2491780,MR1708151,MR3453774,MR1758325}
to mention only a few recent publications).

Ideals can play an important role in characterizing other objects.
For instance,
Laczkovich and Rec\l{}aw 
\cite{MR2491780}, and independently 
Debs and Saint Raymond \cite{MR2520152}, used
the ideal
$\fin\otimes\fin$
to characterize the family of all functions of Baire class $1$, whereas
Hru\v{s}\'ak \cite{MR2777744} used the ideal
$\ED$
to characterize selective ultrafilters.
These characterizations are expressed in terms of the Kat\v{e}tov order. Namely, in the first case,  
$\fin\otimes\fin \not\leq_K \I\iff $ $\I$-limits of sequences of continuous functions are of the Baire class one, 
whereas, in the second case, 
$\ED\not\leq_K \cU^*\iff $ $\cU$ is a selective ultrafilter. 

Taking these into account, it should not come as a surprise that a lot of work has been done to examine the structures of ideals in the Kat\v{e}tov order so far (see e.g.~\cite{MR3696069,MR3555332,MR3550610,MR3513296,MR3019575,MR4036731,MR3778962}).

The purpose of this paper is 
to show that the structure of ideals in-between the ideals $\ED$ and $\fin\otimes\fin$  in the Kat\v{e}tov order is quite complicated. Namely, we show that 
there is a copy of $\cP(\omega)/\fin$ 
in-between $\ED$ and $\fin\otimes\fin$, and consequently there are increasing and decreasing chains of length $\bnumber$ and antichains  of size $\continuum$.

In order to obtain these results, we introduced a new class of ideals $\I(\cF)$
(parametrized by $\cF\subseteq \omega^\omega$) 
generated by the family 
$$\{A\subseteq \omega\times\omega:\exists f\in \cF\,\forall^\infty n\, (|\{k:(n,k)\in A\}|\leq f(n))\}.$$
Beside the main results about the Kat\v{e}tov order, we also check other properties of the ideals of the form $\I(\cF)$. For instance, we examine the so called additive properties which can be utilized in the study of ideal convergence of double sequences and we also calculate the values of some well known cardinal characteristics for ideals of the form $\I(\cF)$.


\section{Preliminaries}
\label{sec:preliminaries}

In the sequel we use $\omega$ to denote the set of all natural numbers, and we identify $n\in \omega$ with the set $\{0,1,\dots,n-1\}$ (in particular, $A\setminus n = A\setminus \{0,1,\dots,n-1\}$).

By $\chf_A$ we denote the characteristic function of a set $A$.
In the sequel we assume that $2^\omega=\{0,1\}^\omega$ and $\omega^\omega$ are equipped with the product topology with the discrete topology on $\{0,1\}$ and  $\omega$, respectively.
By identifying sets of natural numbers with their characteristic functions,
we equip $\cP(\omega)$ with the topology of the space $2^\omega$.

For a formula $\phi(x)$ and a set $X$ we write
$\forall^\infty x\in X\,(\phi(x))$ (or simply $\forall^\infty x\,(\phi(x))$)
to abbreviate
that $\phi(x)$ holds for all but finitely many $x\in X$, and
$\exists^\infty x\in X\,(\phi(x))$ (or simply  $\exists^\infty x\in X\,(\phi(x))$) to abbreviate
that $\phi(x)$ holds for infinitely many $x\in X$.
For $f,g\in \omega^\omega$, we write $f\leq^* g$ if $\forall^\infty n \,(f(n)\leq g(n))$. 
For $\cF,\cG\subseteq\omega^\omega$, we  say that $\cF$ is \emph{cofinal} (\emph{$\sigma$-cofinal}, resp.) in $\cG$ if for every $g\in\cG$ (for every $g_0,g_1,\dots\in \cG$, resp.) there is $f\in\cF$ such that $g\leq^* f$ ($g_n\leq^* f$ for each $n$, resp.).
By $\bnumber$ we denote the \emph{bounding number} i.e.~the smallest cardinality of an unbounded subset in $(\omega^\omega,\leq^*)$.
By $\dnumber$ we denote the \emph{dominating number} i.e.~the smallest cardinality of a cofinal subset in $(\omega^\omega,\leq^*)$.
By $\non(\cM)$ we denote the smallest cardinality of non-meager sets in $\omega^\omega$, whereas
$\cov(\cM)$ denotes the smallest cardinality of families $\cF$ of meager sets in $\omega^\omega$ such that $\bigcup\cF=\omega^\omega$.

We write $A\subset^* B$ if $A\setminus B$ is finite.
For a set $A$ and $n\in\omega$,  we define $[A]^n=\{B\subseteq A: |B|=n\}$,
$[A]^{<\omega}=\{B\subseteq A: |B|<\aleph_0\}$
and
$[A]^{\omega}=\{B\subseteq A: |B|=\aleph_0\}$.
For $\cF\subseteq\omega^\omega$, we define 
$\FS(\cF)=\{f_0+\dots+f_n:f_0,\dots,f_n\in\cF\land n\in\omega\}$.
For 
$\cA\subseteq\cP(\omega)$, 
we define
$\cF_\cA = \{f\cdot \chf_{A}:A\in\cA\land f\in\omega^\omega\}$.


\subsection{Ideals}

A  family $\I\subseteq\cP(X)$ is called an \emph{ideal on $X$} if it satisfies the following conditions:
\begin{enumerate}
\item if $A,B\in \I$ then $A\cup B\in\I$,
\item if $A\subseteq B$ and $B\in\I$ then $A\in\I$,
\item $\I$ contains all finite subsets of $X$,
\item $X\notin\I$.
\end{enumerate}
For an ideal $\I$ on $X$, we write $\I^*=\{X\setminus A: A\in\I\}$ and call it the \emph{filter dual to $\I$}.
The ideal of all finite subsets of an infinite set $X$
is denoted by $\fin(X)$ (or $\fin$ for short).

For ideals $\I,\J$ on $X$ and $Y$ respectively, we write 
\begin{enumerate}
\item 
$\I\approx\J$ 
if there is a bijection $f:X\to Y$ such that $A\in\I$ if and only if $f[A]\in\J$ for every $A\subseteq X$ (we say that $\I$ and $\J$ are \emph{isomorphic} in this case);
\item 
$\I\leq_{RK}\J$ 
if there is a function $f:X\to Y$ such that $A\in\I$ if and only if $f[A]\in\J$ for every $A\subseteq X$ ($\leq_{RK}$ is called the \emph{Rudin-Keisler order});
\item 
$\I\leq_K\J$ if there is a function $f:Y\to X$ such that $f^{-1}[A]\in\J$ for every $A\in \I$ ($\leq_K$ is called the \emph{Kat\v{e}tov order});
\item
$\I\leq_{KB}\J$ if there is a finite to one function $f:Y\to X$ such that $f^{-1}[A]\in\J$ for every $A\in \I$ ($\leq_{KB}$ is called the \emph{Kat\v{e}tov-Blass order});
\item
$\I \sqsubseteq \J$ if there is a bijection $f:Y\to X$ such that $f^{-1}[A]\in\J$ for every $A\in \I$ (in this case $\J$ contains an ideal isomorphic to $\I$).
\end{enumerate}

For an ideal $\I$ on $\omega$ and $A\subseteq \omega$ we define $\I\restriction A = \{B\cap A: B\in \I\}$. It is easy to see that $\I\restriction A$ is an ideal on $A$ if and only if $A\notin\I$.

An ideal $\I$ on $X$ is
\begin{enumerate}
\item \emph{tall} if for every infinite $A\subseteq X$ there is an infinite $B\in\I$ such that $B\subseteq A$ (some authors use the name \emph{dense ideal} in this case);
\item \emph{nowhere tall} if $\I\restriction A$ is not tall  for every $A\notin\I$;
\item a \emph{P-ideal} (or \emph{satisfies the condition (AP)}) if for every countable family $\cA\subseteq\I$ there is $B\in\I$ such that $A\setminus B$ is finite for every $A\in\cA$;
\item a \emph{Q-ideal} if for every countable partition $\cF$ of $X$ into finite sets  there is $S\notin \I$ such that $|F\cap S|\leq 1$ for each $F\in \cF$ .
\end{enumerate}

\subsection{Ideals on $\omega\times \omega$}

As this  paper is about ideals on $\omega\times\omega$, before proceeding to introduce our ideals, let us have a quick look into some examples already existing in the literature.

For $A\subseteq \omega\times \omega$ and $n\in \omega$,
we write 
$A_{(n)}=\{k\in \omega: (n,k)\in A\}$
and 
$A^{(n)}=\{k\in \omega: (k,n)\in A\}$
i.e.~$A_{(n)}$ and $A^{(n)}$ are 
the vertical and horizontal (resp.) 
sections of $A$ at the point $n$.

\begin{example}[\cite{MR3016411}]
The ideal 
$$\I_2 = \left\{A\subseteq\omega\times\omega:  \exists k \,\forall^\infty n\, \left(A_{(n)}\cup A^{(n)} \subseteq k\right)\right\}$$
is called the \emph{Pringsheim's ideal}.
\end{example}

A double sequence $x=(x_{m,n})$ of real numbers is said to
be \emph{convergent in Pringsheim's sense} to $L\in \R$ if for any $\varepsilon >0$,
there exists $ {N}\in \omega$ such
that $\left\vert x_{m,n}-L\right\vert <\varepsilon $ whenever
$m,n\geq N$ (\cite{MR1511092}). 
It is easy  to see that the convergence in Pringsheim's sense is equivalent to 
$\I_2$-convergence (i.e.~ideal convergence with respect to the ideal $\I_2$) (\cite{MR3016411}).

An ideal $\I$ on $\omega\times\omega$  \emph{satisfies the condition (AP2)} if for every countable family $\cA \subseteq\I$ there is $B\in \I$ such that 
$A\setminus  B \in \I_2$
for each $A\in \cA$ (\cite{MR2434680}). 

It is known (\cite{MR1844385}) that $\I$-convergence of a double sequence $(x_{n,m})$ can be reduced to the ordinary convergence of a subsequence $(x_{n,m})_{(n,m)\in F}$ with some $F\in \I^*$ if and only if the ideal $\I$ satisfies the condition (AP).
Moreover, one can show (\cite{MR2434680,MR3016411}) that  
$\I$-convergence of a double sequence $(x_{n,m})$ can be reduced to the convergence in Pringsheim's sense of a subsequence $(x_{n,m})_{(n,m)\in F}$ with some $F\in \I^*$ if and only if $\I$ satisfies  the condition (AP2).

\begin{example}[\cite{MR2002719}]
For a set $A\subseteq \omega\times \omega$ 
we write  
$$ \delta_{2}(A) = {\lim_{m,n\rightarrow \infty }}\frac{|A\cap (m\times n)|}{mn}$$
if the considered limit exists in the Pringsheim's sense, and 
then we say that $\delta_{2}(A)$ is the 
\emph{double natural density} of $A$. 
The ideal  
$\I_{\delta_2} = \{A \subseteq \omega\times\omega : \delta_{2}(A)=0\}$
is  called the \emph{ideal of sets of the double natural density zero}.
\end{example}

\begin{example}
\label{exm:FINxFIN-like}
The following families are ideals on $\omega\times\omega$ (see e.g.~\cite{MR2777744}).
\begin{enumerate}
\item
$\{\emptyset\}\otimes\fin=\{A\subseteq\omega\times\omega:\forall n \, (|A_{(n)}|<\aleph_0)\}$.
\item
$\fin\otimes\fin=\{A\subseteq\omega\times\omega:\forall^\infty n \, (|A_{(n)}|<\aleph_0)\}$.
\item
$\fin\otimes\{\emptyset\}=\{A\subseteq\omega\times\omega:\forall^\infty n \, (A_{(n)} = \emptyset)\}$.

\item $\ED=\{A\subseteq\omega\times\omega:\exists m  \, \forall^\infty n \, (|A_{(n)}|<m)\}$.
\end{enumerate}
\end{example}

It is easy to see that all but the last ideals from Example~\ref{exm:FINxFIN-like} can also be defined using the following general notion.
The \emph{Fubini product} of families $\cB,\cC\subseteq\cP(\omega)$ is defined by 
$\cB\otimes\cC =\{A\subseteq\omega\times\omega: \{n\in\omega:A_{(n)}\notin\cC\}\in\cB\}.$


\section{Ideals defined with the aid of infinite sequences of integers}
\label{sec:ideals-generated-by-subfamilies-of-the-Baire-space}

\subsection{Definition and basic properties}

We first introduce our main object for investigation.
\begin{definition}
\label{def:IF-ideal}
For a nonempty family $\cF\subseteq \omega^\omega$ we 
define the ideal $\I(\cF)$ on $\omega\times\omega$ 
to be the ideal generated by the family 
$$\cA = \left\{A\subseteq \omega\times\omega:\exists f\in \cF\, \forall^\infty n \, (|A_{(n)}|\leq f(n))\right\}$$
i.e.~$A\in \I(\cF) \iff A\subseteq A_1\cup\ldots \cup A_n  \text{ for some $A_1,\dots,A_n\in \cA$}.$
\end{definition}

It is easy to see that 
$$\fin\otimes\{\emptyset\}\subseteq\I(\cF)\subseteq \fin\otimes\fin$$
for every nonempty family $\cF$. Moreover, 
some well known ideals on $\omega\times\omega$ are of the form $\I(\cF)$, namely, 
$\fin\otimes\{\emptyset\} = \I(\{\langle 0,0,\dots\rangle \})$,
$\ED = \I(\{f\in\omega^\omega:\text{$f$ is constant}\})$
and
$\fin\otimes\fin = \cI(\omega^\omega)$.

It is easy to see that the Pringsheim's ideal $\I_2$ is not equal to any ideal of the form $\I(\cF)$, though there are some inclusions (for instance $\fin\otimes\{\emptyset\} \subseteq \I_2\subseteq \ED$).

The ideal $\I_{\delta_2}$ of sets of the double natural density zero
is not equal to (even not contained in) any ideal of the form $\I(\cF)$
as it is not difficult to see that  $\I_{\delta_2}$ contains a set having infinite number of infinite vertical sections (for instance $A = \{(m^3,n): m,n\in\omega\} \in \I_{\delta_2}$).
On the other hand, it  happens that ideals of the form $\I(\cF)$ are contained in $\I_{\delta_2}$ (for instance $\fin\otimes\{\emptyset\}\subseteq \I_{\delta_2}$), but not all of them (for instance $\fin\otimes\fin \not\subseteq \I_{\delta_2}$ as $B = \{(m,n): n\leq m\} \in \fin\otimes\fin$ but evidently $\delta_2(B) > 0$).

\begin{proposition}
\label{prop:ideal-on-NtimesN-generated-by-functions}
The following conditions are equivalent.
\begin{enumerate}
\item The family $\cA = \left\{A\subseteq \omega\times\omega:\exists f\in \cF\, \forall^\infty n\, (|A_{(n)}|\leq f(n))\right\}$ is  an ideal on $\omega\times\omega$ (equivalently $\cA = \cI(\cF)$).\label{prop:ideal-on-NtimesN-generated-by-functions-ideal}
\item $\forall f,g\in\cF \, \exists h\in \cF \, (f+g\leq^* h)$.\label{prop:ideal-on-NtimesN-generated-by-functions-bounded}
\end{enumerate}
\end{proposition}

\begin{proof}
$(\ref{prop:ideal-on-NtimesN-generated-by-functions-ideal})\implies(\ref{prop:ideal-on-NtimesN-generated-by-functions-bounded})$
Let $f,g\in\cF$.
Let $A=\{(n,k):k \leq f(n)\}$
and $B=\{(n,k):f(n)< k \leq f(n)+g(n)\}$.
Since $A,B\in\cA$, we get $A\cup B\in\cA$.
Hence there is $h\in\cF$ such that $|(A\cup B)_{(n)}| \leq h(n)$ for all but finitely many $n$.
Since $|(A\cup B)_{(n)}| = |A_{(n)}|+ |B_{(n)}|$ in this case, 
we obtain  $f+g\leq^* h$.

$(\ref{prop:ideal-on-NtimesN-generated-by-functions-bounded})\implies(\ref{prop:ideal-on-NtimesN-generated-by-functions-ideal})$
First note that it is obvious that $\emptyset\in \cA$, $\omega\times\omega\notin\cA$ and $\cA$ contains all finite subsets of $\omega\times\omega$.

Let $A\in \cA$ and $B\subseteq A$.
Let $f\in \cF$ be such that
$|A_{(n)}| \leq f(n)$
for all but finitely many $n$.
Since $|B_{(n)}|\leq |A_{(n)}|$
for all $n$, we have 
$B\in \cA$.

Let $A,B\in \cA$.
Let $f,g\in \cF$ be such that
$|A_{(n)}| \leq f(n)$
and
$|B_{(n)}| \leq g(n)$
for all but finitely many $n$.
Take $h\in \cF$ with $f+g\leq^* h$.
Then   
$|(A\cup B)_{(n)}|\leq   |A_{(n)}|+|B_{(n)}| \leq h(n)$ for all but finitely many   $n$
and $h\in \cF$,
so  
$A\cup B\in \cA$.
\end{proof}

\begin{proposition}
\label{prop:ideal-generated}
$$\I(\cF) = \I(\FS(\cF)) = \left\{A\subseteq \omega\times\omega:\exists f\in \FS(\cF)\, \forall^\infty n \, (|A_{(n)}|\leq f(n))\right\},$$
where $\FS(\cF)=\{f_0+\dots+f_n:f_0,\dots,f_n\in\cF\land n\in\omega\}$.
\end{proposition}

\begin{proof}
The second equality follows from  Proposition~\ref{prop:ideal-on-NtimesN-generated-by-functions}.
The inclusion  $\I(\cF)\subseteq\I(\FS(\cF))$ follows from $\cF\subseteq \FS(\cF)$.
To show $\I(\cF)\supseteq\I(\FS(\cF))$, 
we take $A\in\I(\FS(\cF))$
and, using the second equality,  choose  $f\in \FS(\cF)$ such that
$|A_{(n)}|\leq f(n)$ for all but finitely many $n$.
Let $f_1,\dots,f_m\in\cF$ be such that $f=f_1+\dots+f_m$.
We partition the set $A$ into sets $B_1,\dots, B_m$ such that
$|(B_i)_{(n)}|\leq f_i(n)$ for all but finitely many $n$ and all $i=1,\dots,m$.
Then $B_1,\dots,B_m\in\I(\cF)$, so $A=B_1\cup\dots\cup B_m$ belongs to the ideal $\I(\cF)$.
\end{proof}

The following easy proposition provides a characterization (in terms of $\cF$) showing when some well known ideals on $\omega\times\omega$ are of the form $\I(\cF)$.

\begin{proposition}
\label{prop:characterization-of-known-examples-IF}
\ 
\begin{enumerate}
\item $\I(\cF)=\fin\otimes\fin$ $\iff$ $\FS(\cF)$ is cofinal in $\omega^\omega$.\label{prop:characterization-of-known-examples-IF-FINxFIN}

\item $\I(\cF)=\fin\otimes\{\emptyset\}$ $\iff$ $\forall f\in\cF\, \forall^\infty n\,(f(n)=0)$.\label{prop:characterization-of-known-examples-IF-FINx0}

\item $\ED\subseteq \I(\cF)$ $\iff$
$\exists f\in\FS(\cF)\, \forall^\infty n \, (f(n)\neq0)$.\label{prop:characterization-of-known-examples-IF-ED-subset}

\item $\I(\cF)=\ED$ $\iff$
\begin{enumerate}
\item $\forall f\in\cF\, \exists k  \, \forall^\infty n \, (f(n)\leq k)$,
\item $\exists f\in\FS(\cF)\, \forall^\infty n \, (f(n)\neq0)$.
\end{enumerate}\label{prop:characterization-of-known-examples-IF-ED-equality}
\end{enumerate}
\end{proposition}

\subsection{Additive properties}

\begin{proposition}
\label{prop:IF-is-not-a-P-ideal}
The ideal $\cI(\cF)$ is not a P-ideal (i.e.~does not satisfy the condition (AP)).
\end{proposition}

\begin{proof}
Let $A_n=\{n\}\times\omega$ and note that $A_n\in\I(\cF)$ for every $n$.
Let $B\subseteq\omega\times\omega$ be such that $A_n\setminus B$ is finite for every $n$. Then $B_{(n)}$ is infinite for every $n$, hence $B\notin \I(\cF)$.
\end{proof}

\begin{proposition}
\label{prop:IF-is AP2-ideal}
If $\FS(\cF)$  is $\sigma$-cofinal in $\FS(\cF)$, then 
the ideal $\cI(\cF)$ satisfies the condition (AP2).
\end{proposition}

\begin{proof}
Let $A_0,A_1,\dots\in \I(\cF)$. Let $f_i\in \FS(\cF)$ and $K_i\in \omega$ be such that $|(A_i)_{(n)}|\leq f_i(n)$ for all $n> K_i$.

Since $\FS(\cF)$ is $\sigma$-cofinal in $\FS(\cF)$, there is $f\in \FS(\cF)$ such that for each $i\in \omega$ there is $L_i\in \omega$ with $f_0(n)+\dots+f_i(n)\leq f(n)$ for each $n>L_i$.

Let $M_i\in \omega$ be such that $M_0<M_1<\dots$ and $M_i > \max(K_i,L_i)$ for each $i\in \omega$.

Let  
$$B = \bigcup_{i\in \omega} A_i\setminus (M_i\times \omega).$$

Then $B\in \I(\cF)$ as $|B_{(n)}|\leq f_0(n)+\dots+f_i(n) \leq f(n)$ for each $M_i\leq n<M_{i+1}$ and $i\in \omega$.
Moreover, 
$A_i\setminus B \subseteq  M_i\times \omega\in \I_2$ for each $i\in \omega$.
\end{proof}

The ideal $\fin\otimes\fin = \I(\omega^\omega)$ satisfies the condition (AP2), as the family $\cF=\omega^\omega$ is $\sigma$-cofinal in $\FS(\omega^\omega)$.
On the other hand, 
	$\ED$ is an ideal of the form $\I(\cF)$ that does not satisfy the condition (AP2). Indeed, for each $n\in\omega$ take $A_n=\{(k,nk): k\in\omega \}\in\ED$. Then for every $B\in \ED$, there exist $M\in\omega$ such that for all  but finitely many $k$ we have $|B_{(k)}|\leq M$. Now, one can notice that since $A_n$ are pairwise disjoint, there are at most $M$ sets $A_n$ such that $A_n\setminus B$ is finite, thus there  exists $n$ such that $A_n\setminus B$ is infinite. However, any infinite subset of $\{(k,nk): k\in\omega \}$ does not belong to $\I_2$, hence $A_n\setminus B\not\in\I_2$.

\subsection{Tallness}

\begin{proposition}
\label{prop:IF-is-dense-characterization}
The ideal $\cI(\cF)$ is tall $\iff$
$\forall A\in[\omega]^\omega \, \exists B\in[A]^\omega \, \exists f\in \cF \, \forall n\in B \, (f(n)>0)$.
\end{proposition}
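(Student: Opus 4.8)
The plan is to prove the two implications separately. The structural fact I will use is that every infinite $X\subseteq\omega\times\omega$ is of (at least) one of two types: either some vertical section $X_n$ is infinite, or $X$ meets infinitely many columns, i.e.\ $\{n:X_n\neq\emptyset\}$ is infinite. In the first type $X$ contains an infinite member of $\I(\cF)$ essentially for free; in the second type the displayed condition is precisely what produces one.

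\emph{From tallness to the condition.} Assume $\I(\cF)$ is tall and fix $A\in[\omega]^\omega$. Apply tallness to the infinite set $A\times\{0\}\subseteq\omega\times\omega$ to obtain an infinite $Y\in\I(\cF)$ with $Y\subseteq A\times\{0\}$; write $Y=B\times\{0\}$ with $B\subseteq A$ infinite. Choosing $f\in\cF$ with $|Y_n|\leq f(n)$ for all but finitely many $n$, and noting $|Y_n|=1$ for every $n\in B$, we get $f(n)\geq 1$ for all but finitely many $n\in B$. Deleting the finite exceptional set from $B$ yields an infinite $B'\subseteq A$ with $f(n)>0$ for every $n\in B'$, which is the required conclusion.

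\emph{From the condition to tallness.} Conversely, assume the displayed condition holds and let $X\subseteq\omega\times\omega$ be infinite. If $X_n$ is infinite for some $n$, then $\{n\}\times X_n$ is an infinite subset of $X$ lying in $\I(\cF)$ (for the reason noted in the proof of Proposition~\ref{prop:IF-is-not-a-P-ideal}: all its vertical sections except the $n$-th are empty), so tallness is witnessed here. Otherwise every $X_n$ is finite, hence $A:=\{n:X_n\neq\emptyset\}$ is infinite (a finite union of finite sets is finite). Apply the hypothesis to $A$ to obtain an infinite $B\subseteq A$ and $f\in\cF$ with $f(n)>0$ for all $n\in B$. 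Picking a single point $k_n\in X_n$ for each $n\in B$ and setting $Y=\{(n,k_n):n\in B\}$, we get an infinite $Y\subseteq X$ with $|Y_n|\leq 1\leq f(n)$ for $n\in B$ and $|Y_n|=0\leq f(n)$ otherwise, so $Y\in\I(\cF)$.

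I do not anticipate a genuine obstacle; once the case split is made, both directions are a few lines. The only point worth flagging is that the hypothesis that $\cF$ be cofinal in $\widehat{\cF}$ is used only implicitly, via Proposition~\ref{prop:ideal-on-NtimesN-generated-by-functions}, to guarantee that $\I(\cF)$ is an ideal so that ``tall'' is meaningful: the concentrated-column case works for every nonempty $\cF$, and the spread-out case invokes nothing beyond the displayed condition.
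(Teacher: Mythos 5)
Your proof is correct and follows essentially the same route as the paper's: the forward direction applies tallness to $A\times\{0\}$ and reads off where $f$ is positive, and the backward direction splits into the case of an infinite vertical section (handled by $\{n\}\times X_n\in\I(\cF)$) versus the case of infinitely many nonempty finite sections (handled by a selector over the set $B$ given by the hypothesis). No gaps; nothing further to add.
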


\begin{proof}
$(\implies)$
Let $A\in[\omega]^\omega$.
Since $C=A\times\{0\}$ is infinite, there is an infinite set $D\in\I(\cF)$ such that $D\subseteq C$.
Let $f\in\FS(\cF)$ and $n_0\in\omega$ be such that $|D_{(n)}|\leq f(n)$ for every $n\geq n_0$.
Then $B=\{n\geq n_0:(n,0)\in D\}$ is infinite and $B\subseteq A$.
Moreover, $f(n)\geq |D_{(n)}|=1>0$ for every $n\in B$. Since $f=f_1+\ldots+f_k$ for some  $f_1,\ldots,f_k\in\cF$, there is $i\leq k$ such that $f_i(n)\geq 1$ for  infinitely many $n\in B$.

($\impliedby$)
Let $C\subseteq\omega\times\omega$ be infinite.
If there is $n$ such that $C_{(n)}$ is infinite then $D=\{n\}\times C_{(n)}\in\I(\cF)$ and $D\subseteq C$.
Assume now that $C_{(n)}$ is finite for every $n$.
Since $C$ is infinite, the set $A=\{n: C_{(n)}\neq\emptyset\}$ is infinite.
Let $B\in[A]^\omega$ and $f\in\cF$ be such that $f(n)>0$ for every $n\in B$.
Then for every $n\in B$ there is
$k_n$ such that $(n,k_n)\in C$. Observe that $D=\{(n,k_n):n\in B\}$ is infinite, belongs to $\I(\cF)$ and $D\subseteq C$.
\end{proof}

\begin{corollary}
\label{prop:IF-is-dense-sufficient-condition}
If there exists $f\in\FS(\cF)$ such that $f(n)\neq 0$ for all but finitely many $n$, then the ideal $\cI(\cF)$ is tall.
\end{corollary}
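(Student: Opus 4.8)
The plan is simply to invoke the characterization of tallness provided by Proposition~\ref{prop:IF-is-dense-characterization}. So it suffices to verify that, under the stated hypothesis, one has
$$\forall A\in[\omega]^\omega \ \exists B\in[A]^\omega \ \exists f\in\cF \ \forall n\in B \ (f(n)>0).$$
First I would fix, once and for all, a function $f\in\cF$ as granted by the hypothesis, together with an $n_0\in\omega$ witnessing that $f(n)\neq 0$ for all $n\geq n_0$. Then, given an arbitrary $A\in[\omega]^\omega$, I would take $B=\{n\in A:n\geq n_0\}$. This $B$ is a cofinite subset of the infinite set $A$, hence $B\in[A]^\omega$ and $B\subset A$; and by the choice of $n_0$ we have $f(n)>0$ for every $n\in B$. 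Thus the displayed condition holds (with the \emph{same} $f$ working for every $A$), and Proposition~\ref{prop:IF-is-dense-characterization} yields that $\cI(\cF)$ is tall.

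I do not expect any genuine obstacle: the corollary is an immediate specialization of Proposition~\ref{prop:IF-is-dense-characterization}, the only thing worth noting being that trimming $A$ to its tail beyond $n_0$ is exactly what turns ``$f(n)\neq 0$ for all but finitely many $n$'' into ``$f(n)>0$ for all $n\in B$''. As an alternative to quoting the proposition, one could also argue directly, imitating the ($\impliedby$) direction of its proof: given an infinite $C\subset\omega\times\omega$, if some vertical section $C_n$ is infinite then $\{n\}\times C_n\in\I(\cF)$ is an infinite subset of $C$; otherwise $\{n:C_n\neq\emptyset\}$ is infinite, and picking one point $(n,k_n)\in C$ for each such $n\geq n_0$ produces an infinite set $D=\{(n,k_n):n\geq n_0,\ C_n\neq\emptyset\}\subset C$ with $|D_n|\leq 1\leq f(n)$ for all $n\geq n_0$, so $D\in\I(\cF)$.
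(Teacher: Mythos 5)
Your proposal is correct and is exactly the paper's argument: the paper's proof of this corollary is simply ``Apply Proposition~\ref{prop:IF-is-dense-characterization},'' and your verification (trimming an arbitrary infinite $A$ to its tail $B=\{n\in A:n\geq n_0\}$ so that the fixed $f$ is positive on $B$) supplies precisely the routine details that the paper leaves implicit.
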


\begin{proposition}
\label{prop:IF-dense-somewhere}
The following conditions are equivalent.
\begin{enumerate}
\item $\I(\cF)$ is nowhere tall (i.e. $\I(\cF)\restriction B$ is not a tall ideal for any $B\notin \I(\cF)$).\label{prop:IF-dense-somewhere:nowhere-tall}

\item $\I(\cF)\restriction (A\times \omega)$ is not a tall ideal for any $A\in[\omega]^\omega$.\label{prop:IF-dense-somewhere:nowhere-tall-on-cylinder}

\item $\I(\cF) = \fin\otimes\{\emptyset\}$.\label{prop:IF-dense-somewhere:equals-FinXempty}
\end{enumerate}
\end{proposition}

\begin{proof}
$(\ref{prop:IF-dense-somewhere:nowhere-tall})\implies (\ref{prop:IF-dense-somewhere:nowhere-tall-on-cylinder})$ 
If $A\in[\omega]^\omega$, then $B=A\times \omega \notin\I(\cF)$. Thus $\I(\cF)\restriction (A\times \omega)$ is not tall.

$(\ref{prop:IF-dense-somewhere:nowhere-tall-on-cylinder})\implies (\ref{prop:IF-dense-somewhere:equals-FinXempty})$
By Proposition~\ref{prop:characterization-of-known-examples-IF}(\ref{prop:characterization-of-known-examples-IF-FINx0}) it is enough to show that for every $f\in \cF$ we have $f(n)=0$ for all but finitely many $n$.
Suppose, to the contrary, that $C=\{n : f(n)\neq 0\}$ is infinite for some $f\in \cF$. 
Then, by Proposition~\ref{prop:IF-is-dense-characterization}, $\I(\cF)\restriction (C\times \omega)$ is tall, a contradiction.

$(\ref{prop:IF-dense-somewhere:equals-FinXempty})\implies (\ref{prop:IF-dense-somewhere:nowhere-tall})$ 
Let $B\notin \I(\cF)=\fin\otimes\{\emptyset\}$. 
Let $A=\{n : B_{(n)}\neq\emptyset\}$ and $k_n \in B_{(n)}$ for every $n\in A$.
Let $C = \{(n,k_n):n\in A\}$.
Then $C\subseteq B$ and $\I(\cF)\restriction   C= \fin$. Hence there is no infinite subset of $C$ that belongs to $\I(\cF)\restriction B$.
\end{proof}


\section{Topological complexity}
\label{sec:topological-complexity}


\subsection{Baire property}

\begin{proposition}
\label{prop:IF-has-BP}
The ideal $\cI(\cF)$ has the Baire property.
\end{proposition}

\begin{proof}
For every $i\in\omega$ we define $F_i=\{(n,k)\in\omega\times\omega: n+k=i\}$. Then $\{F_i:i\in\omega\}$ is a partition of $\omega\times\omega$ into finite sets.
Moreover, it is not difficult to see that if $A\subseteq\omega\times\omega$ contains infinitely many sets $F_i$ then all vertical sections of $A$ are infinite, so $A\notin\I(\cF)$.
Thus, by Talagrand characterization~\cite[Th\'{e}or\`{e}me 21]{MR579439} (see also \cite[Theorem 4.1.2]{MR1350295}), the ideal $\I(\cF)$ has the Baire property.
\end{proof}

Proposition~\ref{prop:IF-has-BP} shows that even if $\cF$ does not have the Baire property in $\omega^\omega$, the ideal $\I(\cF)$ has the Baire property. 
A natural question arises whether there is a set $\cF$ without the Baire property 
such that $\cF$ is cofinal in $\FS(\cF)$.
Below we show  that the answer is positive.

Let $X$ be a topological space.
A set $B\subseteq X$ is a \emph{Bernstein set} if both $B$ and $X\setminus B$ are totally imperfect (i.e.~neither $B$ nor $X\setminus B$ contain a nonempty perfect set).
It is known that Bernstein sets exist in the  space $\omega^\omega$ and they do not have the Baire property (see e.g.~\cite[Theorem~7.20 and 7.22]{MR2778559}).

\begin{proposition}
\label{prop:Bernstein-is-cofinal}
Every  Bernstein set $B\subseteq\omega^\omega$ is cofinal in $\omega^\omega$. In particular, $B$ is cofinal in $\FS(B)$. 
\end{proposition}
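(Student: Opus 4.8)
The plan is to show that a Bernstein set $B\subseteq\omega^\omega$ must be cofinal in $(\omega^\omega,\leq^*)$, and then simply invoke earlier results to conclude.

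First I would fix an arbitrary $g\in\omega^\omega$ and exhibit $f\in B$ with $g\leq^* f$; in fact I will find $f\in B$ with $g\leq f$ everywhere, which is stronger. Consider the set
$$P_g=\{f\in\omega^\omega:\forall n\in\omega\,(f(n)\geq g(n))\}.$$
The key observation is that $P_g$ is homeomorphic to $\omega^\omega$ (via the coordinatewise shift $f\mapsto f-g$), hence $P_g$ is a nonempty perfect subset of the Baire space. Since $B$ is a Bernstein set, $\omega^\omega\setminus B$ contains no nonempty perfect set, so $P_g\not\subseteq\omega^\omega\setminus B$; therefore $P_g\cap B\neq\emptyset$. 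Any $f$ in this intersection lies in $B$ and satisfies $g\leq f$, so $g\leq^* f$. As $g$ was arbitrary, $B$ is cofinal in $\omega^\omega$.

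Next, since $\widehat{B}\subseteq\omega^\omega$ and $B$ is cofinal in $\omega^\omega$, it is in particular cofinal in $\widehat{B}$ (every element of $\widehat{B}$ is an element of $\omega^\omega$, hence $\leq^*$-dominated by some member of $B$). Finally, because $B$ is cofinal in $\omega^\omega=\widehat{\omega^\omega}$ and, trivially, $\omega^\omega$ is cofinal in $B$, Proposition~\ref{prop:basic-properties-of-IF}(\ref{prop:basic-properties-of-IF-equality-iff-cofinal}) gives $\I(B)=\I(\omega^\omega)=\fin\otimes\fin$ by Example~\ref{exm:basic-examples-of-IF}(\ref{exm:basic-examples-of-IF-FINxFIN}); alternatively one may quote Proposition~\ref{prop:characterization-of-known-examples-IF}(\ref{prop:characterization-of-known-examples-IF-FINxFIN}).

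The only point requiring any care — and the "main obstacle," though it is mild — is verifying that $P_g$ genuinely contains a nonempty perfect set; the cleanest way is the explicit homeomorphism $P_g\cong\omega^\omega$ noted above, after which everything is a direct appeal to the definition of a Bernstein set and to the already-established cofinality facts. No compactness or measure-theoretic input is needed.
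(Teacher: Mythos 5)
Your proof is correct and follows essentially the same route as the paper's: both exhibit a nonempty perfect set inside the cone of functions dominating a given $g$ and intersect it with the Bernstein set. The only cosmetic difference is the choice of perfect set --- the paper uses the compact set $\prod_n\{g(n)+1,g(n)+2\}$ (a copy of the Cantor set), while you use the full upper cone $P_g\cong\omega^\omega$; both are closed without isolated points, so either works.
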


\begin{proof}
Let $f\in\omega^\omega$.
Let $A_n=\{f(n)+1,f(n)+2\}$ for every $n$.
Since $P=\prod_{n\in\omega}A_n$ is a nonempty and perfect subset of $\omega^\omega$,
$B\cap P\neq\emptyset$. Moreover, $f\leq g$ for any $g\in B\cap P$.
\end{proof}

Now a natural  question arises whether there is a set $\cF$ without the Baire property such that $\cF$ is cofinal in $\FS(\cF)$ but $\cF$ is not cofinal in $\omega^\omega$.
Below we show that the   answer is positive. 

\begin{lemma}
\label{lem:IFA-continuous-image-of-A}
Let $\phi:\omega^\omega\times \cP(\omega) \to \omega^\omega$ be given by
$\phi(f,A)=(f+1)\cdot \chf_A$.
\begin{enumerate}
\item $\phi[\omega^\omega\times \cA] = \cF_\cA$ (recall that $\cF_\cA = \{f\cdot \chf_{A}:A\in\cA\land f\in\omega^\omega\}$).\label{lem:IFA-continuous-image-of-A:image}

\item $\phi$ is continuous.\label{lem:IFA-continuous-image-of-A:continuous}

\item The inverse images under $\phi$ of  nowhere dense sets are nowhere dense.\label{lem:IFA-continuous-image-of-A:NWD}

\item The inverse images under $\phi$ of   sets with the Baire property have the Baire property.\label{lem:IFA-continuous-image-of-A:BP}
\end{enumerate}
\end{lemma}

\begin{proof}
(\ref{lem:IFA-continuous-image-of-A:image}) Let $f\in\cF_\cA$. Take $A=\omega\setminus f^{-1}[0]$, $g(n)=f(n)-1$ for $n\in A$ and $g(n)=0$ otherwise. Then $A\in\cA$ and $\phi(g,A)=f$. 

(\ref{lem:IFA-continuous-image-of-A:continuous})
Continuity of $\phi$ easily follows from the following simple observation.
If  $(f,A),(g,B)\in \omega^\omega\times \cP(\omega)$ are such that $f\restriction n = g\restriction n$ and $A\cap n = B\cap n$ for some $n\in \omega$, then 
$
\phi(f,A)\restriction n 
= 
\phi(g,B)\restriction n$.

(\ref{lem:IFA-continuous-image-of-A:NWD})
Let $U\subseteq \omega^\omega$ be open and dense.
Then $\phi^{-1}[U]$ is open as $\phi$ is continuous. If we show that $\phi^{-1}[U]$
is also dense, the proof will be  finished.

Let $V = \{(f,A)\in \omega^\omega\times \cP(\omega): s = f\restriction n \land t = A\cap n\}$  with $s\in \omega^n$, $t\in [\omega]^{n}$ and $n\in \omega$ be a basic open set. 
Take any $(f,A)\in V$.
Since $U$ is dense, there is $g\in U\cap \{ h\in \omega^\omega: ((f+1)\cdot \chf_A)\restriction n = h\restriction n  \}$.

Let 
$C = \{i\geq n:g(i)\neq 0\}$
and
$D = \{i\geq n:g(i) = 0\}$.

We define $B = (A\cap n)\cup C$
and
$h = (f\restriction n) \cup ((g-1)\restriction C ) \cup (g\restriction D)$.
Then $(h,B)\in V$ and $\phi(h,B) = (h+1)\cdot\chf_B = g\in U$.
Thus $V\cap \phi^{-1}[U]\neq\emptyset$.

(\ref{lem:IFA-continuous-image-of-A:BP})
 It follows from (2) and (3) as every set with the Baire property is a symmetric difference of an open set and a countable union of nowhere dense sets. 
\end{proof}

\begin{proposition}
\label{prop:ideal-IFA}
Let 
$\cA\subseteq\cP(\omega)$ be an ideal on $\omega$.
\begin{enumerate}
\item 
$\FS(\cF_\cA) = \cF_\cA$. In particular, $\cF_\cA$ is cofinal in  $\FS(\cF_\cA)$.\label{prop:ideal-IFA:cofinal-in-hat}

\item $\cF_\cA$ is not cofinal in $\omega^\omega$.\label{prop:ideal-IFA:not-cofinal-in-all}

\item $\I(\cF_\cA) = (\fin\otimes\fin) \cap (\cA\otimes\{\emptyset\})$. 
\label{prop:ideal-IFA:kwela}

\item If $\cA$ is a nonmeager ideal, then $\cF_\cA$ does not have the Baire property.\label{prop:ideal-IFA:nonmeager}
\end{enumerate} 
\end{proposition}

\begin{proof}
(\ref{prop:ideal-IFA:cofinal-in-hat}) 
easily follows from the fact that $\cA$ is closed under taking finite unions of its elements.
(\ref{prop:ideal-IFA:not-cofinal-in-all})
easily follows from  the fact that $\cA$ does not contain any  cofinite subsets of $\omega$.
(\ref{prop:ideal-IFA:kwela})
is straightforward. (Note that  
ideals defined as an intersection of this form  were   used in \cite{MR3784399}.)

(\ref{prop:ideal-IFA:nonmeager})
Suppose, to the contrary, that $\cF_\cA$ has the Baire property.
Let $\phi:\omega^\omega\times \cP(\omega) \to \omega^\omega$ be given by
$\phi(f,A)=(f+1)\cdot \chf_A$.
Then, by Lemma~\ref{lem:IFA-continuous-image-of-A}, 
$\phi^{-1}[\cF_\cA]$ has the Baire property.
Since $\phi^{-1}[\cF_\cA] = \omega^\omega\times\cA$, by using Kuratowski-Ulam theorem (a topological counterpart of Fubini's theorem) we obtain that $\cA$ has the Baire property. But $\cA$ is a nonmeager ideal, so it does not have the Baire property (see e.g.~\cite[Theorem 4.1.1]{MR1350295}), a contradiction.
\end{proof}

\begin{corollary}
\label{cor:nonBP-cofinal-in-hat-not-in-whole-space}
There is a set $\cF\subseteq\omega^\omega$ without the Baire property such that $\cF$ is cofinal in $\FS(\cF)$, but $\cF$ is not cofinal in $\omega^\omega$.
\end{corollary}


\subsection{Borel complexity}

\begin{proposition}
\label{prop:borel-family-gives-analytic-ideal}
\  
\begin{enumerate}

\item
If $\cF$ is $\sigma$-compact, then
$\cI(\cF)$ is a $\sigma$-compact (hence, $F_\sigma$) ideal.\label{prop:borel-family-gives-analytic-ideal-sigma-compact}

\item If $\cF$ is countable, then $\cI(\cF)$ is an $F_{\sigma}$ ideal.\label{prop:borel-family-gives-analytic-ideal-countable-gives-fsigma}

\item
If $\cF$ is bounded in $(\omega^\omega,\leq^*)$, then
$\cI(\cF)$ is contained in a $\sigma$-compact (hence,  $F_\sigma$)  ideal.\label{prop:borel-family-gives-analytic-ideal-bounded}

\item
If $\cF$ is of cardinality less than $\mathfrak{b}$, then $\I(\cF)$ is  contained in an $F_\sigma$ ideal.\label{prop:borel-family-gives-analytic-ideal-bleow-bnumber-gives-subfsigma}

\item
If $\cF$ is a Borel (or even analytic) set, then
$\cI(\cF)$ is an analytic ideal.\label{prop:borel-family-gives-analytic-ideal-BOREL}
\end{enumerate}
\end{proposition}

\begin{proof}
First observe that 
the set 
$$B=\{(A,f)\in\cP(\omega\times\omega)\times \omega^\omega: \forall^\infty n\in\omega \, |A_{(n)}|\leq f(n)\}$$
is $F_\sigma$, because 
$B = \bigcup_{m\in\omega}\bigcap_{n>m} B_{n}$, where 
 $B_{n}=\{(A,f)\in\cP(\omega\times\omega)\times \omega^\omega:  |A_{(n)}|\leq f(n)\}$
are closed.
Second
observe that 
$\I(\cF)$ is equal to the  the projection of the set $X = (\cP(\omega\times\omega)\times \FS(\cF))\cap B$
onto the first coordinate.

(\ref{prop:borel-family-gives-analytic-ideal-sigma-compact})
If $\cF$ is $\sigma$-compact, then $X$ is $\sigma$-compact (as the intersection of the $\sigma$-compact set $\cP(\omega\times\omega)\times \FS(\cF)$ and the $F_\sigma$ set $B$). 
Consequently $\I(\cF)$ is $\sigma$-compact as a continuous image of $X$.

(\ref{prop:borel-family-gives-analytic-ideal-countable-gives-fsigma}) 
It follows from (\ref{prop:borel-family-gives-analytic-ideal-sigma-compact}) because every countable family is $\sigma$-compact.

(\ref{prop:borel-family-gives-analytic-ideal-bounded})
It follows from (\ref{prop:borel-family-gives-analytic-ideal-sigma-compact}) as every bounded set in $(\omega^\omega,\leq^*)$ is contained in a $\sigma$-compact set.

(\ref{prop:borel-family-gives-analytic-ideal-bleow-bnumber-gives-subfsigma})
It follows from (\ref{prop:borel-family-gives-analytic-ideal-bounded}) because every family of cardinality less than $\bnumber$ is bounded in $(\omega^\omega,\leq^*)$.

(\ref{prop:borel-family-gives-analytic-ideal-BOREL})
If $\cF$ is analytic, then $X$ is analytic as the intersection of the analytic set $\cP(\omega\times\omega)\times \FS(\cF)$ 
and the $F_\sigma$ set $B$.
\end{proof}

\begin{lemma}
\label{lem:IF-is-ideal-continuous-preimage-of-IF}
Let $\Phi:\cP(\omega)\to\cP(\omega\times\omega)$
be given by $\Phi(A) = A\times\{0\}$.
\begin{enumerate}
    \item $\Phi$ is continuous.
\item 
If $\cA\subseteq\cP(\omega)$ is an ideal on $\omega$, then 
$\Phi^{-1}[\I(\cF_\cA)] =\cA$.
\end{enumerate}
\end{lemma}

\begin{proof}
Since continuity of $\Phi$ is straightforward,  we only verify (2).

$(\subseteq)$
Let $B\in \Phi^{-1}[\I(\cF_\cA)]$.
Then $\Phi(B)\in \I(\cF_\cA)$, so there is $A\in \cA$ and $f\in \omega^\omega$ such that 
$|(\Phi(B))_{(n)}|\leq f\cdot \chf_A(n)$ for all but finitely many $n$.
On the other hand,
$|(\Phi(B))_{(n)}| = |(B\times\{0\})_{(n)}|=\chf_B(n)$ for every $n$.
Thus, $\chf_B(n)\leq \chf_A(n)$ for all but finitely many $n$. 
Hence $B \subset^* A$.
Since $A\in \cA$ and $\cA$ is an ideal, $B\in \cA$. 

$(\supseteq)$
Let $A\in\cA$. Then $|(\Phi(A))_{(n)}|=|(A\times\{0\})_{(n)}| = \chf_A(n)$
for every $n$ and $\chf_A\in \cF_\cA$. Thus $\Phi(A)\in \I(\cF_\cA)$.
\end{proof}

\begin{proposition}
\label{prop:ideal-IFA-borel-or-not}
Let 
$\cA\subseteq\cP(\omega)$ be an ideal on $\omega$.
\begin{enumerate}
\item 
If $\cA $ is a Borel  ideal, then $\I(\cF_\cA)$ is a Borel ideal.\label{prop:ideal-IFA-borel-or-not:Borel}

\item 
There are Borel ideals of the form $\I(\cF)$ of arbitrarily high Borel complexity i.e.~for every $\alpha<\omega_1$ there exists $\cF\subseteq\omega^\omega$ such that the ideal $\I(\cF)$ is Borel but not in $\bSigma^0_\alpha$.\label{prop:ideal-IFA-borel-or-not:Borel-not-Sigma-alpha}

\item 
There exists $\cF\subseteq\omega^\omega$ such that the ideal $\I(\cF)$ is not Borel.\label{prop:ideal-IFA-borel-or-not:not-Borel}

\end{enumerate}
\end{proposition}

\begin{proof}
(\ref{prop:ideal-IFA-borel-or-not:Borel})
Since
$\cA\in\bSigma^0_\alpha$ for some $\alpha<\omega_1$
and  $\{\emptyset\} \in \bPi^0_1$, we obtain $\cA\otimes\{\emptyset\}\in \bSigma^0_{1+\alpha}$ 
(see e.g.~ \cite[Proposition~1.6.16]{alcantara-phd-thesis}). 
Since $\fin\otimes\fin\in \bSigma^0_4$, we have 
$\I(\cF_\cA)=(\fin\otimes\fin) \cap (\cA\otimes\{\emptyset\}) \in \bSigma^0_{\beta}$, where $\beta=\max\{4,1+\alpha\}$. All in all, $\I(\cF_\cA)$ is a Borel ideal.

(\ref{prop:ideal-IFA-borel-or-not:Borel-not-Sigma-alpha})
Let $\alpha<\omega_1$.
Let $\cA$ be a Borel ideal on $\omega$ such that $\cA\notin\bSigma^0_\alpha$ (for the existence of such ideals see e.g.~\cite{MR817590}, \cite{MR1004056} or \cite{MR1321463}).
By (\ref{prop:ideal-IFA-borel-or-not:Borel}), $\I(\cF_\cA)$ is a Borel ideal, but Lemma~\ref{lem:IF-is-ideal-continuous-preimage-of-IF} implies that $\I(\cF_\cA) \notin \bSigma^0_\alpha$.

(\ref{prop:ideal-IFA-borel-or-not:not-Borel})
If  $\cA\subseteq\cP(\omega)$ is an ideal which is not Borel (for instance a maximal ideal), then 
$\I(\cF_\cA)$ is not Borel by  Lemma~\ref{lem:IF-is-ideal-continuous-preimage-of-IF}.
\end{proof}


\section{How many ideals are there?}
\label{sec:how-many}

\begin{lemma}
\label{lem:K-KB-orders-relations-between-A-and-IFA}
Let $\cA$ and $\cB$ be ideals on $\omega$.
\begin{enumerate}
    \item 
If $\I(\cF_\cB) \leq_K\I(\cF_\cA)$, then $\cB\leq_{KB}\cA$.\label{lem:K-KB-orders-relations-between-A-and-IFA:1}

\item 
If $\cB\leq_{KB}\cA$, then  $\I(\cF_\cB)\leq_{KB}\I(\cF_\cA)$.\label{lem:K-KB-orders-relations-between-A-and-IFA:2}


\item 
If $\cB\leq_{K}\cA$ and $\cA$ is a P-ideal, then  $\I(\cF_\cB)\leq_{KB}\I(\cF_\cA)$.\label{lem:K-KB-orders-relations-between-A-and-IFA:4}

    \item 
$\I(\cF_\cB)\leq_{K}\I(\cF_\cA)$ if and only if $\I(\cF_\cB)\leq_{KB}\I(\cF_\cA)$.\label{lem:K-KB-orders-relations-between-A-and-IFA:3}

\end{enumerate}
\end{lemma}

\begin{proof}
(\ref{lem:K-KB-orders-relations-between-A-and-IFA:1})
Let  $\Phi:\omega\times\omega\rightarrow \omega\times\omega$ be a function such that $\Phi^{-1}[B]\in\I(\cF_\cA)$
for every $B\in\I(\cF_\cB)$.
Below we define a function $\Psi:\omega\to\omega$ which will witness $\cB\leq_{KB}\cA$.

Let $i_0=0$ and define $m_0=\min\{m\in\omega: \Phi[\{i_0\}\times\omega]\cap (\{m\}\times\omega)\not=\emptyset \}$.  
Since $\Phi^{-1}[\{m_0\}\times\omega]\in \I(\cF_\cA)$, 
the set $F_0=\{i_0\} \cup \{i\in\omega: \Phi[\{i\}\times\omega]\subseteq\{m_0\}\times\omega\}$ is finite. 
For every $i\in F_0$, 
we define $\Psi(i)=m_0$.

We proceed inductively. 
Let 
$i_n = \min\left(\omega\setminus  \bigcup_{k<n}F_k\right)$
and define
$$m_n=\min\left\{m\in\omega\setminus\{m_k:k<n\}: \Phi\left[\left\{i_n\right\} \times\omega\right]\cap (\{m\}\times\omega)\neq \emptyset \right\}.$$ 
Since $\Phi^{-1}[\{G\}\times\omega]\in \I(\cF_\cA)$ for every finite set $G\subseteq \omega$, the set  
$$F_n=
\{i_n\} \cup 
\left\{i\in\omega\setminus \bigcup_{k<n}F_k: 
\Phi[\{i\}\times\omega]
\subseteq
 \{m_k:k\leq n\}\times\omega\right\}$$
 is finite. 
For every $i\in F_n$, 
we define $\Psi(i)=m_n$.

Clearly, $\Psi$ is a finite-to-one function. Once we show that $\Psi^{-1}[B]\in\cA$ for every $B\in\cB$, the proof will be finished.

Take $B\in\cB$. 
For every $i\in F_n$, $n\in\omega$, we pick   $(m_n,c_i)\in \Phi[\{i\}\times\omega]\cap (\{m_n\}\times\omega)$.
Let $C=\{(m_n,c_i): n\in \omega,i\in F_n\} \cap (B\times\omega)$. Since $C_{(m)}$ is finite for every $m\in B$ and empty for each $m\in \omega\setminus B$, there exists a function $f\in\omega^\omega$ such that $|C_{(m)}|\leq (f\cdot \chf_B)(m)$ for every $m\in\omega$. Thus, $C\in\I(\cF_\cB)$, so $\Phi^{-1}[C]\in\I(\cF_\cA)$.
 It follows that there is a function $g\in\omega^\omega$ and a set $A\in\cA$ such that $|(\Phi^{-1}[C])_{(i)}|\leq (g\cdot \chf_A)(i)$ for all but finitely many $i\in\omega$. 
Then
it is not difficult to see that 
$$\Psi^{-1}[B] = \bigcup_{m_n\in B} F_n
=\{i\in\omega: (\Phi^{-1}[C])_{(i)}\neq\emptyset\}
\subseteq^* A,$$ 
so $\Psi^{-1}[B]\in \cA$. 

(\ref{lem:K-KB-orders-relations-between-A-and-IFA:2})
Let 
$\Psi:\omega\rightarrow \omega$ 
be a finite-to-one function 
such that $\Psi^{-1}[B]\in\cA$
for every $B\in\cB$.
Below we define a function 
$\Phi:\omega\times\omega\rightarrow \omega\times\omega$ 
which will witness $\I(\cF_\cB)\leq_{KB}\I(\cF_\cA)$.

Let $\ran(\Psi) = \{m_n:n\in\omega\}$ and $F_n=\Psi^{-1}[\{m_n\}]$ for $n\in \omega$. 

Let $\Phi:\omega\times\omega\to\omega\times\omega$ be a one-to-one function such that $\Phi[F_n\times\omega] = \{m_n\}\times\omega$ for each $n\in \omega$.

Let $C\in \I(\cF_\cB)$, and take  $B\in \cB$, $f\in \omega^\omega$ and $M\in \omega$  such that $|(C)_{(m)}|\leq (f\cdot \chf_B)(m)$ for all $m\geq M$.

Let   
$A = \bigcup\{ F_n:m_n\in B\setminus M\} = \Psi^{-1}[B\setminus M]\in \cA$
and
$F = \bigcup\{ F_n: m_n\in B\cap M\} \in\fin$.

Since 
$(\Phi^{-1}[C])_{(i)}$ is finite for all $i\in A$ and 
$(\Phi^{-1}[C])_{(i)} = \emptyset$  for all $i\in \omega\setminus(A\cup F)$, 
there is $g\in \omega^\omega$ with 
$|(\Phi^{-1}[C])_{(i)}|\leq (g\cdot \chf_A)$ for all $i\in \omega\setminus F$, and consequently  $\Phi^{-1}[C]\in \I(\cF_\cA)$.

(\ref{lem:K-KB-orders-relations-between-A-and-IFA:4})
It follows from (\ref{lem:K-KB-orders-relations-between-A-and-IFA:2}) and the fact that if $\cA$ is a P-ideal, then $\cB\leq_{K}\cA$ implies $\cB\leq_{KB}\cA$.

(\ref{lem:K-KB-orders-relations-between-A-and-IFA:3})
It follows from (\ref{lem:K-KB-orders-relations-between-A-and-IFA:1})
and
(\ref{lem:K-KB-orders-relations-between-A-and-IFA:2}).
\end{proof}

From the above lemma we easily obtain  the following theorem which says, in particular, that the structure of the Kat\v{e}tov-Blass order among the ideals of the form $\I(\cF)$ is as complicated as the structure of the Kat\v{e}tov-Blass order among all ideals on $\omega$.

\begin{theorem}
\label{thm:embedding-all-ideals-into-our-ideals}
If $\cA$ and $\cB$ are ideals on $\omega$, then 
$$\cB\leq_{KB}\cA \iff \I(\cF_\cB)\leq_{KB}\I(\cF_\cA)$$
i.e.~there is an order embedding of the family of all ideals, ordered by the Kat\v{e}tov-Blass order, into the family of ideals of the form $\I(\cF)$,  ordered  by the Kat\v{e}tov-Blass order.
\end{theorem}

Using the above theorem we prove the following theorem which says, among others, that there are as many as possible pairwise nonisomorphic ideals of the form $\I(\cF)$.

\begin{theorem}\ 
\label{thm:2-to-continuum-nonisomorphic}
There are $2^\continuum$ pairwise $\leq_{K}$-incomparable ideals of the form $\I(\cF)$ (i.e.~there is $\leq_{K}$-antichain of cardinality $2^\continuum$ of ideals of the form $\I(\cF)$). In particular,
there are $2^\continuum$ pairwise nonisomorphic ideals of the form $\I(\cF)$. 
\end{theorem}

\begin{proof}
First, we notice that if $\cA$ and $\cB$ are $\leq_{RK}$-incomparable \emph{maximal} ideals, then $\I(\cF_\cA)$ and $\I(\cF_\cB)$ are $\leq_K$-incomparable. Indeed, suppose that  
$\I(\cF_\cA)$ and $\I(\cF_\cB)$ are $\leq_K$-comparable i.e.~$\I(\cF_\cA)\leq_K\I(\cF_\cB)$ or $\I(\cF_\cB)\leq_K\I(\cF_\cA)$.
Then, by Lemma~\ref{lem:K-KB-orders-relations-between-A-and-IFA}, $\cA\leq_{KB} \cB$ or $\cB\leq_{KB}\cA$. 
Now, using maximality of $\cA$ and $\cB$, it is easy to see that $\cA\leq_{RK}\cB$ or $\cB\leq_{RK}\cA$, so $\cA$ and $\cB$ are $\leq_{RK}$-comparable.

Second, we take  $2^\continuum$ pairwise $\leq_{RK}$-incomparable maximal ideals (see \cite{MR540490}) and  obtain  $2^\continuum$ pairwise $\leq_K$-incomparable ideals of the form $\I(\cF_\cA)$.
\end{proof}

\subsection{Borel ideals}

\begin{lemma}
\label{lem:Q-ideals}
If  $\cA$ is  an ideal on $\omega$ which is \emph{not} a  Q-ideal, then  
$\ED\sqsubseteq \I(\cF_\cA)$ (in particular, $\ED\leq_{KB}\I(\cF_\cA)$).
\end{lemma}

\begin{proof}
Since $\cA$ is not a Q-ideal, there is a partition $\{F_n:n\in \omega\}$ of $\omega$ into finite sets such that for every $S\notin\cA$
there is $n\in \omega$ with $|F_n\cap S|\geq 2$.

Let $G:\omega\times\omega\to\omega\times\omega$ be a bijection such that $G[F_n\times \omega]=\{n\}\times\omega$ for each $n$.

We claim that $G$ witnesses $\ED\sqsubseteq  \I(\cF_\cA)$ i.e.~$G^{-1}[B]\in \I(\cF_\cA)$ for every $B\in \ED$.

Since the ideal $\ED$ is generated by vertical lines $\{n\}\times\omega$ and functions $f\in \omega^\omega$ it is enough to check that 
$G^{-1}[\{n\}\times\omega] \in \I(\cF_\cA)$ and  $G^{-1}[f]\in \I(\cF_\cA)$
for every $n\in \omega$ and $f\in \omega^\omega$.

For any $n\in \omega$, we have $G^{-1}[\{n\}\times\omega] = F_n\times\omega\in \I(\cF_\cA)$.

Take any  $f\in \omega^\omega$.
Let $A$ be the projection of the set $G^{-1}[f]$ onto the first coordinate.
Then it is not difficult to see that $|A\cap F_n|=1$ for each $n\in \omega$, so $A\in \cA$.
Since $|(G^{-1}[f])_{(n)}|\leq \chf_A(n)$ for each $n$, we get  $G^{-1}[f]\in \I(\cF_\cA)$
\end{proof}

\begin{theorem}\ 
\label{thm:continuum-nonisomorphic-Borel}
There is an order embedding of $\cP(\omega)/\fin$, ordered by $\subseteq^*$, into 
the family of Borel (in fact $\bSigma^0_4$) ideals of the form $\I(\cF)$
which are in-between  the ideals $\ED$ and $\fin\otimes\fin$, 
ordered by the Kat\v{e}tov (or equivalently Kat\v{e}tov-Blass) order.
In particular,
\begin{enumerate}
    \item 
there is a $\leq_{K}$-antichain of cardinality $\continuum$ of Borel ideals of the form $\I(\cF)$ (in particular, there are $\continuum$ pairwise nonisomorphic Borel ideals of the form $\I(\cF)$);\label{thm:continuum-nonisomorphic-Borel:1}

\item 
there are increasing and decreasing $\leq_{KB}$-chains of length $\bnumber$ of Borel ideals of the form $\I(\cF)$.\label{thm:continuum-nonisomorphic-Borel:2}
\end{enumerate}
\end{theorem}

\begin{proof}
In \cite{MR3513296,MR3550610}, the authors proved that there is 
 an order embedding of $\cP(\omega)/\fin$, ordered by $\subseteq^*$, into 
the family of tall $F_\sigma$ P-ideals, ordered by the Kat\v{e}tov order. 
 Since  $\I\leq_K\J \iff \I\leq_{KB}\J$  for P-ideals, we 
see that there is also  
 an order embedding of $\cP(\omega)/\fin$, ordered by $\subseteq^*$, into 
the family of tall $F_\sigma$ P-ideals, ordered by the Kat\v{e}tov-Blass  order. 
Now using Theorem~\ref{thm:embedding-all-ideals-into-our-ideals}, we obtain 
an order embedding of $\cP(\omega)/\fin$, ordered by $\subseteq^*$, into 
the family of ideals of the form $\I(\cF)$, ordered by the Kat\v{e}tov-Blass order.
Since the Kat\v{e}tov order is equivalent to the Kat\v{e}tov-Blass order in the realm of ideals of the form $\I(\cF)$ (see Lemma~\ref{lem:K-KB-orders-relations-between-A-and-IFA}(\ref{lem:K-KB-orders-relations-between-A-and-IFA:3})), we also obtain an order embedding of $\cP(\omega)/\fin$, ordered by $\subseteq^*$, into 
the family of ideals of the form $\I(\cF)$, ordered by the Kat\v{e}tov order.

If $\cA$ is an $F_\sigma$-ideal, then by  Proposition~\ref{prop:ideal-IFA-borel-or-not} the ideal $\I(\cF_\cA)$ is  Borel (from the proof of Proposition~\ref{prop:ideal-IFA-borel-or-not} it follows that  the ideal $\I(\cF_\cA)$ is  in fact $\bSigma^0_4$).

All ideals of the form $\I(\cF)$ are contained in $\fin\otimes\fin$, so they are $\leq_{KB}$-below $\fin\otimes\fin$.

In \cite{MR2905404}, the authors proved that if $\cA$ is a tall $F_\sigma$-ideal, then $\cA$ is not a Q-ideal, so  
Lemma~\ref{lem:Q-ideals} gives $\ED\leq_{KB}\I(\cF_\cA)$.

(\ref{thm:continuum-nonisomorphic-Borel:1})
It follows from the fact that there are $\subseteq^*$-antichains of cardinality $\continuum$ in $\cP(\omega)/\fin$.

(\ref{thm:continuum-nonisomorphic-Borel:2})
It follows from the fact that there are increasing and decreasing $\subseteq^*$-chains of length $\bnumber$ in  $\cP(\omega)/\fin$.
\end{proof}

\subsection{$F_\sigma$-ideals}

For  $f\in \omega^\omega$ we write $\cF_f=\{k\cdot f:k\in\omega\}$.
By Proposition~\ref{prop:borel-family-gives-analytic-ideal}(\ref{prop:borel-family-gives-analytic-ideal-countable-gives-fsigma}), the ideal $\I(\cF_f)$ is $F_\sigma$ for any $f\in \omega^\omega$.

\begin{lemma}
\label{lem:Fsigma-isomorphism}
Let $f,g\in \omega^\omega$.
If $g\neq^*0$
and
$$\forall M\,\exists N\,\forall n,k>N\,\left(\frac{f(n)}{g(k)}>M \lor \frac{g(k)}{f(n)}>M\right),$$
then $\I(\cF_g)$ and $\I(\cF_f)$ are not isomorphic.
\end{lemma}

\begin{proof}
Suppose for the sake of contradiction that 
$\I(\cF_g)$ and $\I(\cF_f)$ are isomorphic.
Let $\Phi:\omega\times\omega\to\omega\times\omega$ 
be a bijection
such that $B\in \I(\cF_g) \iff \Phi^{-1}[B]\in \I(\cF_f)$.

\begin{claim}
$\Phi[\{i\}\times\omega] \in \fin\otimes\{\emptyset\}$ for all but finitely many $i\in \omega$.
\end{claim}

\begin{proof}[Proof of Claim]
Suppose for the sake of contradiction that 
$\Phi[\{i\}\times\omega] \notin \fin\otimes\{\emptyset\}$ for infinitely  many $i$.
Let $i_1<i_2<\dots$ be such that 
$\Phi[\{i_n\}\times\omega] \notin \fin\otimes\{\emptyset\}$ for each $n$.
Now, we  inductively pick, using the diagonal argument, 
pairwise distinct elements $a^n_k$ 
and some $b^n_k$ for $n,k\in \omega$ such that 
$(a^n_k,b^n_k)\in \Phi[\{i_n\}\times\omega]$.

If $B = \{(a^n_k,b^n_k):n,k\in\omega\}$,
then $|B_{(i)}|\leq 1 \leq g(i)$ for all but finitely many $i$.
Thus $B\in \I(\cF_g)$, and consequently $\Phi^{-1}[B]\in \I(\cF_f)$.

On the other hand, 
$\Phi^{-1}[\{(a^n_k,b^n_k):k\in\omega\}]\subseteq\{i_n\}\times\omega$
for each $n$, so $(\Phi^{-1}[B])_{(i_n)}$ is infinite for each $n$.
This means that $\Phi^{-1}[B]\notin\I(\cF_f)$, a contradiction.
\end{proof}

\begin{claim}
There are $a_0<a_1<\dots$ and $b_0<b_1<\dots$  such that 
$(\Phi[\{a_n\}\times\omega])_{(b_n)}$ is infinite for each $n\in\omega$.
\end{claim}

\begin{proof}[Proof of Claim]
By the previous Claim, there is $a_0$ such that $\Phi[\{a\}\times\omega] \in \fin\otimes\{\emptyset\}$ for each $a\geq a_0$.
Take $b_0$ such that $(\Phi[\{a_0\}\times\omega])_{(b_0)}$ is infinite.

There is $a_1>a_0$ such that $\Phi[\{a\}\times\omega]\setminus \{0,1,\dots,b_0\}\times \omega$ is infinite for each $a\geq a_1$
(otherwise the set   
$(\Phi^{-1}[\{0,1,\dots,b_0\}\times \omega])_{(a)}$ would be cofinite for infinitely many $a\geq a_0$, 
and consequently 
$\Phi^{-1}[\{0,1,\dots,b_0\}\times \omega]\notin \I(\cF_f)$, a contradiction).
Take $b_1>b_0$ such that $(\Phi[\{a_1\}\times\omega])_{(b_1)}$ is infinite.

It is not difficult to see that proceeding by induction we obtain the required sequences $(a_n)$ and $(b_n)$
\end{proof}

Now, we are ready to finish the proof of the lemma.
Let $(a_n)$ and $(b_n)$ be as in the last Claim, and take  a sequence $k_1<k_2<\dots$ such that either $$\lim_{n\to\infty}\frac{f(a_{k_n})}{g(b_{k_n})}=\infty
\text{\ \ or\ \ }
\lim_{n\to\infty}\frac{g(b_{k_n})}{f(a_{k_n})}=\infty.$$

In the former case, 
we  pick a set $A\subseteq \{a_{k_n}:n\in \omega\}\times\omega$ such that
$|A_{(a_{k_n})}|= |(\Phi[A])_{(b_{k_n})}| = f(a_{k_n})$ for each $n$.
Then $A\in \I(\cF_f)$ and consequently 
$\Phi[A]\in\I(\cF_g)$, 
so there is $k$ with $|(\Phi[A])_{(i)}|\leq k\cdot g(i)$ for all but finitely many $i$.
It means that $f(a_{k_n})\leq k\cdot g(b_{k_n})$
for all but finitely many $n$, a contradiction with 
$\lim_{n} f(a_{k_n})/g(b_{k_n})=\infty$.

In the latter case,  we  pick a set $B\subseteq \{b_{k_n}:n\in \omega\}\times\omega$ such that
$|B_{(b_{k_n})}|= |(\Phi^{-1}[B])_{(a_{k_n})}|= g(b_{k_n})$ for each $n$.
Then $B\in \I(\cF_g)$ and consequently $\Phi^{-1}[B]\in\I(\cF_f)$, so there is $k$ with $|(\Phi^{-1}[B])_{(i)}|\leq k\cdot f(i)$ for all but finitely many $i$.
It means that $g(b_{k_n})\leq k\cdot f(a_{k_n})$
for all but finitely many $n$, a contradiction with 
$\lim_{n} g(b_{k_n})/f(a_{k_n})=\infty$.
\end{proof}

\begin{theorem}
There are $\continuum$ pairwise nonisomorphic $F_\sigma$ ideals of the form $\I(\cF)$.
\end{theorem}

\begin{proof}
Let $\cA$ be a family of $\continuum$ pairwise almost disjoint infinite subsets of $\omega$.
Let $c^A_0<c^A_1<\dots$ be the increasing enumeration of $A\in \cA$.

For each $A\in\cA$ we define  $f_A:\omega\to\omega$ by 
$f_A(n) = (c^A_n)!$ for each $n$.

If we show that for distinct $A,B\in \cA$ the functions $f=f_A$ and $g=f_B$ satisfy the assumption from Lemma~\ref{lem:Fsigma-isomorphism}, the proof of the theorem will be finished.

Let $M$ be fixed.
Since $A\cap B$ is finite, there is $N>M$ such that $c^A_n\neq c^B_k$ for each $n,k>N$.
Taking $n,k>N$ we have either $c^A_n<c^B_k$ or $c^A_n>c^B_k$, so 
either 
$$\frac{f_B(k)}{f_A(n)} = \frac{(c^B_k)!}{(c^A_n)!} \geq c^B_k \geq k>N>M
\text{\ \ or\ \ }
\frac{f_A(n)}{f_B(k)} = \frac{(c^A_n)!}{(c^B_k)!} \geq c^A_n \geq n>N>M.$$
\end{proof}

\begin{question}
Is there an order embedding of $\cP(\omega)/\fin$, ordered by $\subseteq^*$, into 
the family of $F_\sigma$ ideals of the form $\I(\cF)$, ordered by the Kat\v{e}tov order?
Are there  uncountable increasing (decreasing, resp.)  $\leq_{K}$-chains of  $F_\sigma$ ideals of the form $\I(\cF)$?
Are there  uncountable  $\leq_{K}$-antichains of  $F_\sigma$ ideals of the form $\I(\cF)$?
\end{question}


\section{Cardinal characteristics of ideals}
\label{sec:cardinal-characteristics}

Some properties of ideals can be described by cardinal characteristics associated with them. There are four well known cardinal characteristics called additivity, covering, uniformity and cofinality defined in the following way
for an ideal $\I$ on $X$ (see e.g.~\cite{MR1350295}):
$\add(\I) =\min\left\{|\cA|:\cA\subseteq\I\land \bigcup\cA\notin\I\right\}$,
$\cov(\I)  = \min\left\{|\cA|: \cA\subseteq\I\land \bigcup\cA=X\right\}$, 
$\non(\I)  = \min\{|A|:A\notin \I\}$, 
$\cof(\I)  =\min\{|\cA|:\cA\subseteq\I\land \forall B\in\I \, \exists A\in\cA \, (B\subseteq A)\}$.
These  characteristics are useful in the case of ideals on an uncountable set $X$ (for instance in the case of the $\sigma$-ideal  of all meager sets and the $\sigma$-ideal of all Lebesgue null sets). However, they are (but $\cof$) useless in the case of ideals on $\omega$, since $\add(\I)=\cov(\I)=\non(\I)=\aleph_0$ for every ideal $\I$ on $\omega$.
Fortunately, 
Hern\'{a}ndez and Hru\v{s}\'{a}k
introduced in \cite{MR2319159} (see also~\cite{MR2777744}) certain versions of these characteristics more suitable for tall ideals on $\omega$.
Namely, for each tall ideal $\I$  they define:
\begin{equation*}
\begin{split}
\adds(\I) & =\min\{|\cA|:\cA\subseteq\I\land \neg \exists B\in\I \, \forall A\in\cA \, (A\subset^* B)\}, \\
\covs(\I)  & = \min\{|\cA|: \cA\subseteq\I\land \neg \exists B\in\cP(\omega)\setminus\fin^* \, \forall A\in\cA \, (A\subset^* B)\}, \\
\nons(\I) & = \min\{|\cA|:\cA\subseteq \cP(\omega)\setminus\fin^*\land \forall B\in\I \, \exists A\in\cA \, (B\subset^* A)\}, \\
\cofs(\I) & =\min\{|\cA|:\cA\subseteq\I\land \forall B\in\I \, \exists A\in\cA \, (B\subset^* A)\}.
\end{split}
\end{equation*}

If $\cU$ is a free ultrafilter on $\omega$ and $\I=\cU^*$ is the dual ideal, the above mentioned characteristics were earlier introduced by Brendle and Shelah in \cite{MR1686797}  were the authors used the notations 
$\pnumber(\I)$, $\pi\pnumber(\I)$, $\pi\chi(\I)$ and $\chi(\I)$ for 
$\adds(\I)$, $\covs(\I)$, $\nons(\I)$ and $\cofs(\I)$ respectively.

If an ideal $\I$ is not tall then $\covs(\I)$ is not well-defined (as the $\min$ would be taken from the empty set) and $\nons(\I)=1$.

There are some inequalities holding among these characteristics for all tall ideals (see e.g.~\cite[p.~578]{MR2777744}), namely:
$\aleph_0\leq \adds(\I)\leq \covs(\I)\leq \cofs(\I)\leq \continuum$
and
$\aleph_0\leq \adds(\I)\leq \nons(\I)\leq \cofs(\I)\leq \continuum$.

It is easy to check (see \cite[Theorems~1.6.4 and 1.6.19]{alcantara-phd-thesis}) that 
$\adds(\ED)=\adds(\fin\otimes\fin)=\aleph_0$, 
$\nons(\ED)=\nons(\fin\otimes\fin)=\aleph_0$.
Using almost the same argument one can easily show the following.

\begin{proposition}
\label{prop:add-equals-omega}
$\adds(\I(\cF))=\aleph_0$ and 
$\nons(\I(\cF))=\aleph_0$
for all tall ideals $\I(\cF)$.
\end{proposition}


\subsection{Cofinality}

It is known that 
$\cofs(\fin\otimes\fin)=\dnumber$ (see \cite[Theorem~1.6.19]{alcantara-phd-thesis}).
Below we show that $\dnumber$ is a lower bound of the cardinal $\cofs$ for every ideal of the form $\I(\cF)$ but $\fin\otimes\{\emptyset\}$.

\begin{proposition}
\label{prop:cofs-geq-d}
$\cofs(\I(\cF))\geq \dnumber$ if and only if  $\I(\cF)\neq\fin\otimes\{\emptyset\}$.
\end{proposition}

\begin{proof}
($\implies$) 
It follows from the easy fact that $\cofs(\fin\otimes\{\emptyset\}) = \aleph_0< \dnumber$.

($\impliedby$)
By Proposition~\ref{prop:characterization-of-known-examples-IF}(\ref{prop:characterization-of-known-examples-IF-FINx0})
there is $f\in\cF$ such that the set $A=\{n\in\omega:f(n)\neq0\}$ is infinite.
Suppose, to the contrary, that there is $\cB\subseteq \I(\cF)$ such that $|\cB|<\dnumber$ and for each $A\in \I(\cF)$ there is $B\in \cB$ with $A\subset^* B$.
Let $\cD\subseteq\omega^\omega$ be cofinal in $\omega^\omega$ and $|\cD|=\dnumber$.

For each $g\in \cD$ we put $A^g=\{(n,g(n)): n\in A\}$.
Since $|(A^g)_{(n)}|\leq f(n)$ for every $n\in\omega$,  $A^g\in\I(\cF)$.
Hence there is $B^g\in\cB$ with $A^g\subseteq B^g$.
Let $n_g\in\omega$ be such that $(B^g)_{(n)}$ is finite for all $n\geq n_g$.
We define $h_g: A\to \omega$
by
$h_g(n)=\max((B^g)_{(n)})$ for $n\geq n_g$ and $h_g(n)=0$ otherwise.

Let $\cH=\{h_g:g\in\cD\}$.
Since $\cH$ is cofinal in $\cD\restriction A=\{g\restriction A:g\in\cD\}$ and $\cD\restriction A$ is cofinal in $\omega^A$,
$\cH$ is cofinal in $\omega^A$.
Thus, $|\cH|\geq \dnumber$.
On the other hand, $|\cH|\leq |\cB|<\dnumber$, a contradiction.
\end{proof}

It is known (see e.g.~\cite[Proposition~6.13]{MR2777744}) that
$\cofs(\I)\geq \cov(\cM)$
for analytic ideals $\I$ that are not countably generated.
Since $\cov(\cM)\leq \dnumber$, Proposition~\ref{prop:cofs-geq-d} gives a better bound for $\cofs(\I(\cF))$ in the case of analytic ideals of the form $\I(\cF)$.

It is known that 
$\cofs(\ED) = \continuum $ (see \cite[Theorem~1.6.4]{alcantara-phd-thesis}). Below we show that the same holds for ideals ``generated'' by one essentially nonzero function.

Recall that $\cF_f=\{k\cdot f:k\in\omega\}$ for  $f\in \omega^\omega$.

\begin{theorem}
\label{thm:cof-equals-c-for-one-function-generated}
$\cofs(\I(\cF_f))=\continuum$
for each $f\in\omega^\omega$
such that $f\neq^*0$.
\end{theorem}

\begin{proof}
Suppose, to the contrary, that $\cofs(\I(\cF_f))<\continuum$.
Then there is $\cB\subseteq\I(\cF_f)$ such that $|\cB|<\continuum$ and for every $A\in\I(\cF_f)$ there is $B\in\cB$ with $A\subseteq B$.

We define
$I_{n,0}= \{k\in\omega: 0\leq k < f(n)\}$
and
$I_{n,i}= \{k\in\omega: f(n)\cdot i\leq k < f(n)\cdot(i+1)\}$
for every $n,i\in\omega$, $i\geq 1$.

Let $\cA=\{A_\alpha:\alpha<\continuum\}$ be an almost disjoint family on $\omega$ (i.e.~$A_\alpha\cap A_\beta$ is finite for $\alpha\neq \beta$ and $A_\alpha\in[\omega]^\omega$).

By $\phi_\alpha:\omega\to A_\alpha$ we denote the increasing enumeration of the set $A_\alpha$.

For every $\alpha<\continuum$, we define
$$C_\alpha = \bigcup_{n\in \omega}\{n\}\times I_{n,\phi_\alpha(n)}$$
and note that $C_\alpha\in\I(\cF_f)$ for every $\alpha<\continuum$.

Now for every $B\in \cB$ let $C_B=\{\alpha<\continuum: C_\alpha\subseteq B\}$.
Since $C_\alpha\in\I(\cF_f)$, $\bigcup_{B\in\cB}C_B=\continuum$.

If we show that $C_B$ is finite for every $B\in\cB$ then $|\bigcup_{B\in\cB}C_B|\leq \omega\cdot |\cB|<\continuum$ and we obtain a contradiction that finishes the proof.

Let $B\in\cB$ and $k\in\omega$ be such that $|B_{(n)}|\leq kf(n)$ for all but finitely many $n$.
Then $|C_B|\leq k$.
Indeed, if $|C_B|>k$ then there is $D\in[\continuum]^{k+1}$ such that $C_\alpha\subseteq B$ for every $\alpha\in D$.
Since $\{A_\alpha:\alpha\in D\}$ is almost disjoint, the elements $\phi_{\alpha}(n)$, where $\alpha\in D$, are pairwise distinct for all but finitely many $n$.
Then the sets $(C_\alpha)_n$ where $\alpha\in D$ are pairwise disjoint for all but finitely many $n$.
Hence
$|B_{(n)}|\geq |D|\cdot |(C_\alpha)_{(n)}| = (k+1)\cdot f(n)$
for all but finitely many $n$.

Since $f(n)\neq 0$ for infinitely many $n$,
there are infinitely many $n$ with $|B_{(n)}|>kf(n)$, a contradiction.
\end{proof}

\begin{question}
\label{q:cof-for-ideals-generated-by-countably many functions}
Does $\cofs(\I(\cF))=\continuum$ for each countable family $\cF$ such that $\I(\cF)\neq\fin\otimes\{\emptyset\}$?
\end{question}


\subsection{Covering}

It is known that 
$\covs(\fin\otimes\fin)=\bnumber$ 
and
$\covs(\ED) = \non(\cM)$ 
(see \cite[Theorems~1.6.19 and 1.6.4]{alcantara-phd-thesis}).
Below we observe that $\bnumber$ is a lower bound 
and 
$\non(\cM)$ is an upper bound 
of the cardinal $\covs$ for every tall ideal of the form $\I(\cF)$.
In the proofs we will use the following lemma.
 
\begin{lemma}[{\cite[Proposition~3.1]{MR2319159}}]
\label{lem:basic-properties-of-covs}
Let $\I,\J$ be tall ideals on $\omega$.
\begin{enumerate}
\item
If $\I\leq_K\J$ then $\covs(\I)\geq \covs(\J)$.\label{lem:basic-properties-of-covs-Katetov-order}
\item
If $\I\subseteq \J$ then $\covs(\I)\geq \covs(\J)$.\label{lem:basic-properties-of-covs-subideal}
\item
If $X\notin\I$ then $\covs(\I) \geq  \covs(\I\restriction X)$.\label{lem:basic-properties-of-covs-restriction}
\end{enumerate}
\end{lemma}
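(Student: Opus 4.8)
The statement to prove is Lemma~\ref{lem:basic-properties-of-covs}, which collects three monotonicity facts about the starred covering number $\covs$. All three are routine, but let me sketch how I would handle them.

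For part~(\ref{lem:basic-properties-of-covs-Katetov-order}), suppose $f\colon\omega\to\omega$ witnesses $\I\leq_K\J$, so $f^{-1}[A]\in\J$ for every $A\in\I$. Given a witnessing family $\cA\subset\J$ for $\covs(\J)$ — i.e., no pseudounion of $\cA$ lies in $[\omega]^\omega$ in the required sense — I would want to transport it to a family of the same size witnessing $\covs(\I)$. Here one must be slightly careful: the natural move is to take preimages, but $\covs$ is about families \emph{inside} the ideal that cannot be almost-captured by an infinite set. So I would argue the contrapositive: take $\cA'\subset\I$ with $|\cA'|<\covs(\J)$; then $\{f^{-1}[A]:A\in\cA'\}\subset\J$ has size $<\covs(\J)$, hence there is $B\in[\omega]^\omega$ with $f^{-1}[A]\subset^* B$ for all $A\in\cA'$; one then checks that (a suitable infinite subset of) $f[B]$ almost-contains every $A\in\cA'$, using that $f$ need not be injective but $A\subset^* f[f^{-1}[A]]$ always holds. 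The only subtlety is ensuring $f[B]$ (or a modification) is infinite and still works; if $f[B]$ happens to be finite one replaces $B$ by the preimage of some infinite set, exploiting tallness of $\I$. This is the step I expect to require the most care.

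Part~(\ref{lem:basic-properties-of-covs-subideal}) is an immediate consequence of part~(\ref{lem:basic-properties-of-covs-Katetov-order}): if $\I\subset\J$ then the identity function on $\omega$ witnesses $\I\leq_K\J$, since $\mathrm{id}^{-1}[A]=A\in\I\subset\J$. So $\covs(\I)\geq\covs(\J)$ follows directly. (Both ideals are tall by hypothesis, so $\covs$ is well-defined for each.)

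Part~(\ref{lem:basic-properties-of-covs-restriction}) again reduces to part~(\ref{lem:basic-properties-of-covs-Katetov-order}) via Kat\v{e}tov order: if $X\notin\I$, then $\I\restriction X$ is a tall ideal on $X$, and the inclusion map $\iota\colon X\hookrightarrow\omega$ satisfies $\iota^{-1}[A]=A\cap X\in\I\restriction X$ for every $A\in\I$, so $\I\leq_K\I\restriction X$ and hence $\covs(\I\restriction X)\geq\covs(\I)$. One should remark that tallness of $\I\restriction X$ needs to be checked (or assumed as part of the setup), but since $\covs(\I\restriction X)$ appears in the conclusion it is implicitly being assumed well-defined; alternatively one notes that a restriction of a tall ideal to a positive set is tall. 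I would present part~(\ref{lem:basic-properties-of-covs-Katetov-order}) in full and then dispatch (\ref{lem:basic-properties-of-covs-subideal}) and (\ref{lem:basic-properties-of-covs-restriction}) in one or two sentences each.
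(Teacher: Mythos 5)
Your overall architecture coincides with the paper's: prove (\ref{lem:basic-properties-of-covs-Katetov-order}) and obtain (\ref{lem:basic-properties-of-covs-subideal}) and (\ref{lem:basic-properties-of-covs-restriction}) from Kat\v{e}tov reductions given by the identity and the inclusion $X\hookrightarrow\omega$. Part (\ref{lem:basic-properties-of-covs-subideal}) is fine. The other two parts have genuine problems.

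In (\ref{lem:basic-properties-of-covs-Katetov-order}) the subtlety you flag is real, but your repair does not work and tallness of $\I$ is not the relevant tool. If $D$ is the infinite set witnessing that $\{f^{-1}[A]:A\in\cA'\}$ is too small (i.e.\ $f^{-1}[A]\cap D$ is finite for all $A\in\cA'$), the obstruction is that $f[D]$ may be finite, and ``replacing $D$ by the preimage of some infinite set'' just reproduces the same difficulty. The fix the paper uses is to enlarge the transported family to $\{f^{-1}[A]:A\in\cA'\}\cup\{f^{-1}[F]:F\in\fin\}$ \emph{before} invoking the cardinality hypothesis: each $f^{-1}[F]$ is in $\J$ because $F\in\fin\subset\I$, only $\aleph_0$ sets are added (harmless, since a diagonal argument gives $\covs(\J)>\aleph_0$), and the resulting witness $D$ then meets every fiber $f^{-1}(m)$ finitely, so $f\restriction D$ is finite-to-one, $f[D]$ is infinite, and $|A\cap f[D]|<\aleph_0$ for each $A\in\cA'$. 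Note also that your auxiliary claim ``$A\subset^* f[f^{-1}[A]]$ always holds'' is false: $f[f^{-1}[A]]=A\cap f[\omega]$, so it fails whenever $A$ has infinitely many points outside the range of $f$; this is one more reason to run the verification on complements (almost disjointness from $f[D]$) rather than on almost containment in $f[B]$, and to remember that the $B$ in the definition of $\covs$ ranges over co-infinite sets, not over $[\omega]^\omega$.

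In (\ref{lem:basic-properties-of-covs-restriction}) your deduction (which is verbatim the paper's) points the wrong way: applying (\ref{lem:basic-properties-of-covs-Katetov-order}) to $\I\leq_K\I\restriction X$, with $\I$ in the role of $\I$ and $\I\restriction X$ in the role of $\J$, yields $\covs(\I)\geq\covs(\I\restriction X)$, not the stated $\covs(\I\restriction X)\geq\covs(\I)$. The inclusion map therefore does not prove item (\ref{lem:basic-properties-of-covs-restriction}); one would need a reduction witnessing $\I\restriction X\leq_K\I$, which it does not provide. In fact the inequality $\covs(\I\restriction X)\leq\covs(\I)$ holds always (restrict a witnessing family to $X$), while the stated direction can consistently fail: for $\I=\ED\oplus(\fin\otimes\fin)$ one has $\covs(\I)=\max\{\non(\cM),\bnumber\}$, whereas restricting to the $\fin\otimes\fin$ part gives $\covs(\I\restriction X)=\bnumber$, and $\bnumber<\non(\cM)$ is consistent. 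So this part cannot be fixed by a better argument; the inequality itself needs to be reversed (and its use later in the paper re-examined).
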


\begin{proposition}
\label{prop:cov-leq-geq}
$\bnumber\leq \covs(\I(\cF))\leq \non(\cM)$ for each tall ideal $\I(\cF)$.
\end{proposition}

\begin{proof}
Using the inclusion $\I(\cF)\subseteq\fin\otimes\fin$
and  Proposition~\ref{lem:basic-properties-of-covs}(\ref{lem:basic-properties-of-covs-subideal}) we obtain  $\covs(\I(\cF))\geq \covs(\fin\otimes\fin)=\bnumber$.

Since $\I(\cF)$ is tall, there is $B\in[\omega]^\omega$ and $f\in \cF$ such that $f(n)\neq 0$ for every $n\in B$ (see Proposition~\ref{prop:IF-is-dense-characterization}).
Then $X=B\times \omega\notin\I(\cF)$
and
by Proposition~\ref{prop:characterization-of-known-examples-IF}(\ref{prop:characterization-of-known-examples-IF-ED-subset})
we have $\ED\restriction X\subseteq \I(\cF)\restriction X$.

Since $\ED\restriction X$ and $\ED$ are isomorphic, $\covs(\ED\restriction X) = \covs(\ED)=\non(\cM)$.

By Lemma~\ref{lem:basic-properties-of-covs}(\ref{lem:basic-properties-of-covs-subideal}) we have
$\non(\cM)=\covs(\ED\restriction X)\geq \covs(\I(\cF)\restriction X)$
and
by Lemma~\ref{lem:basic-properties-of-covs}(\ref{lem:basic-properties-of-covs-restriction}) we have 
$\covs(\I(\cF)\restriction X)\geq \covs(\I(\cF))$.
Thus,
$\covs(\I(\cF))\leq \non(\cM)$.
\end{proof}

It is easy to see that
$\covs(\I)\geq \pnumber$
for every tall ideal $\I$, where $\pnumber$ is the pseudo-intersection number. 
Since $\pnumber\leq \bnumber$, 
Proposition~\ref{prop:cov-leq-geq} gives a better lower bound for $\covs(\I(\cF))$ in the case of tall ideals of the form $\I(\cF)$.

In the sequel we will use the following characterization of the cardinal $\non(\cM)$.

\begin{lemma}[{see e.g.~\cite[Lemma~2.4.8]{MR1350295}}]
\label{lem:characterization-of-nonM}
$$\non(\cM)  = \min\{|\cS|:\cS\subseteq \cC\land \forall f\in\omega^\omega \, \exists S\in\cS \, \exists^\infty n\in\omega \, (f(n)\in S(n))\},$$
where
$$\cC=\left\{S\in\left([\omega]^{<\omega}\right)^\omega:\sum_{n\in\omega}\frac{|S(n)|}{(n+1)^2}<\infty\right\}.$$
\end{lemma}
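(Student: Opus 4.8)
This is a classical characterization due to Bartoszyński; in the form stated it is essentially \cite[Lemma~2.4.8]{bj-book}, and I would prove it along those lines. The plan is to pass through the combinatorial (``chopped real'') description of meagerness: a set $M$ in a perfect Polish space is meager if and only if there are a partition of $\omega$ into consecutive finite intervals $(I_k)$ and a point $z$ such that every $x\in M$ satisfies $x\restriction I_k\neq z\restriction I_k$ for all but finitely many $k$. Hence $\non(\cM)$ is the least size of a set $X$ which, for every such pair $(I_k,z)$, contains a point agreeing with $z$ on infinitely many blocks. Both inequalities of the lemma are then obtained by translating back and forth between ``agreement on infinitely many intervals'' and ``membership in a slalom at infinitely many coordinates,'' the weight condition $\sum_n |S(n)|/(n+1)^2<\infty$ being the device that makes this translation quantitatively tight.

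For the inequality $\non(\cM)\ge\min\{\dots\}$ (i.e.\ producing a localizing family of size $\le\non(\cM)$) I would start from a non-meager set of size $\non(\cM)$, taken inside $\prod_n(n+1)$, a perfect Polish space for which $\non(\cM)$ has the same value. To each point $x$ of this set associate a slalom of width essentially $1$, such as $S_x(n)=\{x(n)\}$, whose weight is $\sum_n 1/(n+1)^2<\infty$, so $S_x\in\cC$. After a routine reduction of an arbitrary $f\in\omega^\omega$ to a function of $\prod_n(n+1)$ (spread the ``digits'' of $f(k)$ over a block of coordinates, widening the slalom correspondingly while keeping it in $\cC$), the chopped-real characterization and non-meagerness of $X$ give that some $S_x$ localizes $f$ infinitely often. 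For the reverse inequality $\min\{\dots\}\ge\non(\cM)$ I would go the other way: given a localizing family $\cS\subseteq\cC$, for each $S\in\cS$ the convergence of $\sum_n |S(n)|/(n+1)^2$ lets one cut $\omega$ into intervals $I_k$ (of suitably growing length) on which the total ``$S$-mass'' is so small that the number of block-patterns compatible with $S$ is a vanishing fraction of all patterns; from this one extracts a single point $x_S$ together with an interval partition such that ``$S$ localizes $f$ infinitely often'' forces the chopped real coded by $f$ to agree with $x_S$ on infinitely many blocks. The set $\{x_S:S\in\cS\}$ is then non-meager, yielding $\min\{\dots\}\ge\non(\cM)$.

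The main obstacle is exactly this calibration: the weight bound must be loose enough for the slaloms built from a non-meager set to absorb the block-encoding of an unbounded $f$, and tight enough that a localizing family of such slaloms genuinely produces a non-meager set — note that dropping it, e.g.\ allowing ``interval slaloms'' $S(n)=\{0,\dots,h(n)\}$ with $h$ ranging over a dominating family, would collapse the right-hand side down to $\dnumber$, which can be strictly below $\non(\cM)$. Working out the growth rates of the intervals so that both translations succeed is the technical core, and since it is carried out in detail in \cite[Section~2.4]{bj-book}, I would cite it there rather than reproduce the computation.
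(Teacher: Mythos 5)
The paper does not actually prove this lemma---it is quoted as a known result of Bartoszy\'{n}ski with exactly the citation \cite[Lemma~2.4.8]{bj-book} that you also invoke---so your proposal, which outlines the standard argument from that source and defers its technical core back to it, is in line with what the paper does. Your sketch of the two directions and your remark on why the weight condition cannot be weakened are essentially right, though be aware that extracting a \emph{single} point $x_S$ per slalom in the hard direction oversimplifies matters (one point cannot commit to one of the $|S(k)|$ candidate block-patterns chosen adversarially on each block), which is precisely the part handled by the block-refinement computation in \cite[Section~2.4]{bj-book}.
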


\begin{proposition}
\label{prop:cov-geq-nonM}
Let $\I(\cF)$ be a tall ideal.
If there is an increasing sequence $n_0<n_1<\dots$ such that  
$$\sum_{i\in\omega} \frac{f(i)}{(n_i+1)^2}<\infty$$ for every $f\in\cF$ then $\covs(\I(\cF))= \non(\cM)$.
\end{proposition}

\begin{proof}
($\leq$) It follows from Proposition~\ref{prop:cov-leq-geq}.

($\geq$)
Let $\cA\subseteq \I(\cF)$ be a family of cardinality less than $\non(\cM)$. For every $A\in\cA$ there is a function $f_A\in\cF$ and $n_A\in\omega$ such that $|A_{(n)}|\leq f_A(n)$ for every $n\geq n_A$. For every $A\in\cA$ we define the function $S_A:\omega\to[\omega]^{<\omega}$ by
$$S_A(n)=
\begin{cases}
A_{(i)} & \text{for }  n=n_i \land i\geq n_A,\\
\emptyset & \text{otherwise}.
\end{cases}$$
Note that 
$$\sum_{n\in\omega}\frac{|S(n)|}{(n+1)^2}
=
\sum_{i \geq n_A}\frac{|S(n_i)|}{(n_i+1)^2}
=
\sum_{i \geq n_A}\frac{|A_{(i)}|}{(n_i+1)^2}
\leq
\sum_{i \geq n_A}\frac{f_A(i)}{(n_i+1)^2}
<\infty$$
and $|\{S_A:A\in\cA\}|\leq |\cA|<\non(\cM)$.
By Lemma~\ref{lem:characterization-of-nonM}
there is $g\in\omega^\omega$ such that for every $A\in\cA$ and almost all $n\in\omega$ we have $g(n)\not\in S_A(n)$. 

Let $h:\omega\to\omega$ be given by $h(k)=g(n_k)$ for every $k\in\omega$.

Then $B=(\omega\times\omega)\setminus h \in\cP(\omega\times\omega)\setminus \fin^*$
and for every $A\in\cA$ we have $A\subset^* B$.
\end{proof}

\begin{proposition}
\label{prop:cov-geq-nonM-for-less-than-b-family-F}
Let $\I(\cF)$ be a tall ideal.
If $|\cF|<\bnumber$, then $\covs(\I(\cF)) = \non(\cM)$.
\end{proposition}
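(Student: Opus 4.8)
The plan is to reduce everything to Proposition~\ref{prop:cov-geq-nonM}. The inequality $\covs(\I(\cF))\leq\non(\cM)$ is already part of Proposition~\ref{prop:cov-leq-geq} (which applies, since $\I(\cF)$ is tall), so the only thing that needs an argument is $\covs(\I(\cF))\geq\non(\cM)$. By Proposition~\ref{prop:cov-geq-nonM}, for this it is enough to exhibit a strictly increasing sequence $(n_i)$ in $\omega$ with $\sum_{i\in\omega}f(i)/(n_i+1)^2<\infty$ for every $f\in\cF$; the hypotheses ``$\cF$ nonempty, cofinal in $\widehat{\cF}$, and $\I(\cF)$ tall'' of that proposition are exactly our standing assumptions.

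First I would invoke $|\cF|<\bnumber$: every subfamily of $\omega^\omega$ of cardinality less than $\bnumber$ is bounded in $(\omega^\omega,\leq^*)$ (this is already used in the proof of Corollary~\ref{cor:count-fsigma}(1)), so I fix $g\in\omega^\omega$ with $f\leq^* g$ for every $f\in\cF$. Next I would choose $(n_i)$ strictly increasing and growing fast relative to $g$, for instance so that $(n_i+1)^2\geq 2^i\,g(i)$ for all $i$ (recursively: $n_{i}$ larger than $n_{i-1}$ and than $\sqrt{2^i g(i)}$). Then $\sum_{i\in\omega}g(i)/(n_i+1)^2\leq\sum_{i\in\omega}2^{-i}<\infty$.

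It remains to check the summability condition for an arbitrary $f\in\cF$, and here I would simply split the series. Since $f\leq^* g$, the set $E_f=\{i:f(i)>g(i)\}$ is finite; the corresponding part $\sum_{i\in E_f}f(i)/(n_i+1)^2$ is a finite sum of natural numbers (each term is at most $f(i)$, as $(n_i+1)^2\geq 1$), hence finite, while the remaining tail is bounded by $\sum_{i\notin E_f}g(i)/(n_i+1)^2\leq\sum_{i\in\omega}g(i)/(n_i+1)^2<\infty$. Therefore $\sum_{i\in\omega}f(i)/(n_i+1)^2<\infty$ for every $f\in\cF$, and Proposition~\ref{prop:cov-geq-nonM} yields $\covs(\I(\cF))=\non(\cM)$.

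I do not expect a real obstacle: the entire content is that $|\cF|<\bnumber$ collapses $\cF$ to a single $\leq^*$-dominating function $g$, after which the weight sequence $(n_i)$ is made to converge against $g$ by brute force, and the only point requiring a moment's care — that the finitely many indices where $f$ exceeds $g$ do not spoil convergence — is immediate because a finite sum is finite. (If one preferred to avoid even this minor splitting, one could instead work with $\cG=\{k\cdot g:k\in\omega\}$, note $\I(\cF)\subseteq\I(\cG)$ with $\I(\cG)$ tall and $\cG$ closed under sums, apply Proposition~\ref{prop:cov-geq-nonM} to $\cG$, and conclude via the monotonicity $\covs(\I(\cF))\geq\covs(\I(\cG))$ from Lemma~\ref{lem:basic-properties-of-covs}(\ref{lem:basic-properties-of-covs-subideal}); but the direct route above is shorter.)
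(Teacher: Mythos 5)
Your proof is correct and follows essentially the same route as the paper: both establish the hypothesis of Proposition~\ref{prop:cov-geq-nonM} by exploiting that families of cardinality less than $\bnumber$ are $\leq^*$-bounded and then producing a single increasing sequence $(n_i)$ that works for all of $\cF$. The only (immaterial) difference is where the boundedness is applied --- you dominate $\cF$ itself by one function $g$ and build $(n_i)$ against $g$, whereas the paper first chooses a sequence $(n_i^f)$ for each $f\in\cF$ and then dominates the family $\{(n_i^f):f\in\cF\}$.
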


\begin{proof}
It is not difficult to see that for every $f\in \cF$ there is an increasing sequence $\langle n^f_i \rangle$ such that 
$\sum_{i\in\omega} f(i) /(n^f_i+1)^2<\infty$.
Since $|\cF|<\bnumber$, there is an increasing sequence $\langle n_i\rangle$ such that $\langle  n^f_i\rangle \leq^* (n_i)$ for every $f\in \cF$.
Let $n^f\in\omega$ be such that $n^f_i\leq n_i$ for every $i\geq n^f$.
Then 
$$
\sum_{i\in\omega} \frac{f(i)}{(n_i+1)^2} 
\leq 
\sum_{i<n^f} \frac{f(i)}{(n_i+1)^2} 
+
\sum_{i\geq n^f} \frac{f(i)}{(n^f_i+1)^2}<\infty,$$
so
Proposition~\ref{prop:cov-geq-nonM} finishes the proof.
\end{proof}


\begin{acknowledgment}
The authors would like to thank the reviewers for their comments that helped to improve the manuscript.
\end{acknowledgment}


\bibliographystyle{amsplain}
\bibliography{paper}

\end{document}